\newtheorem{thm}{Theorem}
\newtheorem{cor}[thm]{Corollary}
\newtheorem{prop}[thm]{Proposition}
\newtheorem{lem}[thm]{Lemma}
\theoremstyle{definition}
\newtheorem{defn}[thm]{Definition}
\theoremstyle{remark}
\newtheorem{rmk}[thm]{Remark}
\def\co{\colon\thinspace}
\newcommand{\mb}[1]{\mathbb{#1}}
\newcommand{\colim}{\ensuremath{\mathop{\rm colim}}}
\newcommand{\hocolim}{\ensuremath{\mathop{\rm hocolim}}}
\newcommand{\Map}{\ensuremath{{\rm Map}}}
\newcommand{\Th}{{\rm Th}}
\newcommand{\Pic}{{\rm Pic}}
\newcommand{\Or}{{\rm Or}}
\newcommand{\pic}{{\rm pic}}
\newcommand{\hofib}{{\rm hofib}}
\newcommand{\eilm}[1]{\ensuremath{{H} #1}}
\newcommand{\smsh}[1]{\ensuremath{\mathop{\wedge}_{#1}}}
\newcommand{\tens}[1]{\ensuremath{\mathop{\otimes}_{#1}}}
\newcommand{\pow}[1]{\left\llbracket{#1}\right\rrbracket}
\newcommand{\Pm}{{\mb P}_m}
\newcommand{\Pp}{{\mb P}_p}
\newcommand{\Dm}{D_m}
\title{Strictly commutative complex orientation theory}
\author{Michael J. Hopkins\thanks{Partially supported by NSF grant
    DMS--0906194.}, Tyler Lawson\thanks{Partially supported by NSF
    grant DMS--1206008.}}
\begin{document}
\maketitle

\begin{abstract}
For a multiplicative cohomology theory $E$, complex orientations are
in bijective correspondence with multiplicative natural
transformations to $E$ from complex bordism cohomology $MU$. If $E$ is
represented by a spectrum with a highly structured multiplication, we
give an iterative process for lifting an orientation $MU \to E$ to a
map respecting this extra structure, based on work of Arone--Lesh. The
space of strictly commutative orientations is the limit of an inverse
tower of spaces parametrizing partial lifts; stage $1$ corresponds to
ordinary complex orientations, and lifting from stage $(m-1)$ to stage
$m$ is governed by the existence of an orientation for a family of
$E$-modules over a fixed base space $F_m$.

When $E$ is $p$-local, we can say more. We find that this tower only
changes when $m$ is a power of $p$, and if $E$ is $E(n)$-local the
tower is constant after stage $p^n$. Moreover, if the coefficient ring
$E^*$ is $p$-torsion free, the ability to lift from stage $1$ to stage
$p$ is equivalent to a condition on the associated formal group law
that was shown necessary by Ando.
\end{abstract}

Characteristic classes play a fundamental role in algebraic topology,
with the primary example being the family of Chern classes $c_i(\xi)
\in H^{2i}(X)$ associated to a complex vector bundle $\xi \to X$. Not
all generalized cohomology theories possess Chern classes, but they
are present in important cases such as complex $K$-theory $K$ and
complex bordism theory $MU$. In fact, for a cohomology theory $E$
taking values in graded-commutative rings, the following types of
information are equivalent:
\begin{itemize}
\item a choice of characteristic class $c_1(\xi) \in \widetilde
  E^2(X)$ for complex line bundles $\xi \to X$ such that, for the
  canonical line bundle $\gamma_1 \to \mb{CP}^1$, the isomorphism
  $\widetilde E^2(\mb{CP}^1) \cong E^0$ carries $c_1(\gamma_1)$ to 1;
\item a family of characteristic classes $c_i(\xi) \in \widetilde
  E^{2i}(X)$ for complex vector bundles $\xi \to X$ satisfying the
  above formula for $c_1(\gamma_1)$ and such that the Cartan formula
  for Whitney sums holds; or
\item a natural transformation $MU \to E$ of multiplicative cohomology
  theories.
\end{itemize}
In the third case, the natural transformation $MU^{2i}(X) \to
E^{2i}(X)$ allows us to push forward the characteristic classes
$c_i^{MU}(\xi)$ to classes $c_i^E(\xi)$. Such a map $MU \to E$ is
called a complex orientation of $E$, and as a result $MU$ plays a
fundamental role in the theory of Chern classes.

Moving from the homotopy category to the point-set level, the spectrum
$MU$ representing complex bordism is also one of the best known
examples of a spectrum with a multiplication which is associative and
commutative up to all higher coherences (an $E_\infty$ ring
structure). If we know that $E$ is also equipped with an $E_\infty$
ring structure, it is natural to ask whether a complex orientation
$MU \to E$ can be lifted to a map respecting this $E_\infty$ ring
structure (an $E_\infty$ orientation), and what data is necessary to
describe this. This is a stubborn problem and in prominent cases the
answer is unknown, such as when $E$ is a Lubin--Tate spectrum.

For any complex orientation of $E$, there is a formal group law $\mb G$
expressing the first Chern class of a tensor product of two complex
line bundles: we have
\[
c_1(\xi' \otimes_{\mb C} \xi'') = c_1(\xi') +_{\mb G} c_1(\xi'')
\]
for an associative, commutative, and unital power series
$x +_{\mb G} y \in E^*\pow{x,y}$. Ando gave a necessary and sufficient
condition for an orientation $MU \to E$ to be an $H_\infty$
orientation \cite{ando-thesis}, a weaker structure than an $E_\infty$
orientation which can be described as a natural transformation that
respects geometric power operations. Ando's condition was that a
certain natural power operation $\Psi$ with source $E^*(X)$ should act
as the canonical Lubin isogeny on the coordinate ring
$E^*(\mb{CP}^\infty)$ of the formal group law $\mb G$ (see also
\cite[\S 4.3]{ando-hopkins-strickland-sigma}). In general, this is
stronger than the data of a complex orientation alone
\cite{noel-johnson-ptypical}, and very few $E_\infty$ ring spectra are
known to admit $H_\infty$ orientations. (Ando also showed that
Lubin--Tate spectra associated to the Honda formal group law have
unique $H_\infty$ orientations; in recent work, Zhu has generalized
this to all of the Lubin--Tate spectra \cite{zhu-normcoherence}.)

However, it is the case that the rationalization $MU_{\mb Q}$ is
universal among rational, complex oriented $E_\infty$ rings
\cite[6.1]{baker-richter-thom}. Further, Walker studied orientations
for the case of $p$-adic $K$-theory and the Todd genus
\cite{walker-thesis}, and M\"ollers studied orientations in the case
of $K(1)$-local spectra \cite{mollers-thesis}. Both gave proofs that
Ando's condition for $H_\infty$ orientations was also sufficient to
produce $E_\infty$ orientations.

The goal of this paper is to apply work by Arone--Lesh
\cite{arone-lesh-filtered} to extend this procedure, giving an
inductive approach to the construction of $E_\infty$
orientations. Before getting into details, we will describe the
motivation for this construction.

As an $E_\infty$ ring spectrum, the fact that $MU$ is a Thom spectrum
gives it a universal property. There is a map of infinite loop spaces
$U \to GL_1(\mb S)$ from the infinite unitary group to the space of
self-equivalences of the sphere spectrum $\mb S$, and $MU$ is
universal among $E_\infty$ ring spectra $E$ with a chosen nullhomotopy
of the map of infinite loop spaces $U \to GL_1(\mb S) \to GL_1(E)$
\cite[\S V]{may-quinn-ray-ringspectra}.

We can recast this using the language of Picard groups. We consider
two natural functors: one sends a complex vector space $V$ to the
spectrum $\Sigma^\infty S^V$, which has an inverse under the smash
product; the second sends a smash-invertible spectrum $I$ to a
smash-invertible $MU$-module $MU \wedge I$. On restricting to the
subcategory of weak equivalences and applying classifying spaces, we
obtain maps of $E_\infty$ spaces
\[
\coprod_m BU(m) \to \mb Z \times BGL_1(\mb S) \to \mb Z \times BGL_1(MU).
\]
Here the latter two are the Picard spaces $\Pic(\mb S)$ and $\Pic(MU)$
\cite[2.2.1]{mathew-stojanoska-pictmf}. Passing through an infinite
loop space machine, we obtain a sequence of maps of spectra
\[
ku \to \pic(\mb S) \to \pic(MU),
\]
where $ku$ is the connective complex $K$-theory spectrum. The
universal property of $MU$ can then be rephrased: the spectrum $MU$ is
universal among $E_\infty$ ring spectra $E$ equipped with a coherently
commutative diagram
\[
\xymatrix{
ku \ar[r] \ar[d] &
\pic(\mb S) \ar[d] \\
H\mb Z \ar@{.>}[r] &
\pic(E).
}
\]
(More concretely, this asks for a map $H\mb Z \to \pic(E)$ and a chosen
homotopy between the two composites.)

This allows us to exploit Arone--Lesh's sequence of spectra
interpolating between $ku$ and $H\mb Z$, giving us an inductive
sequence of obstructions to $E_\infty$ orientations.
\begin{thm}
There exists a filtration of $MU$ by $E_\infty$ Thom spectra
\[
\mb S\to MX_1 \to MX_2 \to MX_3 \to \cdots \to MU
\]
with the following properties.
\begin{enumerate}
\item The map $\hocolim MX_i \to MU$ is an equivalence.
\item There is a canonical complex orientation of $MX_1$ such that, for
  all $E_\infty$ ring spectra $E$, the space $\Map_{E_\infty}(MX_1,E)$
  is homotopy equivalent to the space of ordinary complex orientations
  of $E$.
\item For all $m > 0$ and all maps of $E_\infty$ ring spectra $MX_{m-1}
  \to E$, the space of extensions to a map of $E_\infty$ ring spectra
  $MX_m \to E$ is a homotopy pullback diagram of the form
  \[
  \xymatrix{
    \Map_{E_\infty} (MX_m, E) \ar[r] \ar[d] &
    \Map_{E_\infty} (MX_{m-1}, E) \ar[d] \\
    \{\ast\} \ar[r] & \Map_*(F_m, \Pic(E))
  }
  \]
  for a certain fixed space $F_m$, where $\Pic(E)$ is the classifying
  space of the category of smash-invertible $E$-modules.

  More specifically, given an $E_\infty$ map $MX_{m-1} \to E$, there
  is an $E$-module Thom spectrum $M_E\xi$ classified by a map $\xi\co
  F_m \to \Pic(E)$. An extension to an $E_\infty$ map $MX_m \to E$
  exists if and only if there is an orientation $M_E \xi \to E \wedge
  S^{2m}$ in the sense of \cite{ando-blumberg-gepner-hopkins-rezk},
  and the space of extensions is naturally equivalent to the space
  $\Or(\xi)$ of orientations.
\item The map $MX_{m-1} \to MX_m$ is a rational equivalence if $m >
  1$, a $p$-local equivalence if $m$ is not a power of $p$, and a
  $K(n)$-local equivalence if $m > p^n$.
\end{enumerate}
\end{thm}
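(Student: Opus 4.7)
The plan is to apply the $E_\infty$ Thom spectrum functor of Ando--Blumberg--Gepner--Hopkins--Rezk to the Arone--Lesh filtration interpolating between $H\mb Z$ and $ku$. Recall that Arone--Lesh (once suitably enhanced to $E_\infty$ spectra) construct a tower $0 \to X_1 \to X_2 \to \cdots$ of $E_\infty$ spectra with $\hocolim X_m \simeq ku$, in which each cofiber $X_m/X_{m-1}$ is modeled by $\susp^{2m} \susp^\infty_+ F_m$ for an explicit combinatorial space $F_m$ arising from the poset of partitions of $\{1,\dots,m\}$. Composing with $ku \to \pic(\mb S)$ gives compatible $E_\infty$ maps $X_m \to \pic(\mb S)$, and Thomifying termwise yields the required tower $\mb S \to MX_1 \to MX_2 \to \cdots$.

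For (1), the $E_\infty$ Thom spectrum functor is a left adjoint and so preserves homotopy colimits: $\hocolim MX_m$ is the Thom spectrum of $ku \to \pic(\mb S)$, which is $MU$ by the May--Quinn--Ray universal property. For (2), the Thom spectrum adjunction identifies $\Map_{E_\infty}(MX_1, E)$ with the space of coherently commutative diagrams of the shape displayed in the introduction (restricted to the first Arone--Lesh stage), which is precisely the space of ordinary complex orientations of $E$ by the reformulated universal property; the canonical orientation of $MX_1$ corresponds to the identity $MX_1 \to MX_1$.

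For (3), the cofiber sequence $X_{m-1} \to X_m \to \susp^{2m} \susp^\infty_+ F_m$, translated by the Thom spectrum adjunction, produces a homotopy fiber sequence of mapping spaces into $\pic(E)$. Concretely, the fiber of $\Map_{E_\infty}(MX_m, E) \to \Map_{E_\infty}(MX_{m-1}, E)$ over a given orientation is the space of nullhomotopies of the induced map $\susp^{2m} \susp^\infty_+ F_m \to \pic(E)$, which unwinds by the ABGHR theory to the space of orientations of the $E$-module Thom spectrum $M_E\xi$ classified by $\xi \co F_m \to \Pic(E)$, with the shift $E \wedge S^{2m}$ arising from the $\susp^{2m}$ in the cofiber.

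Property (4) reduces to contractibility statements for the space $F_m$ (equivalently for the cofiber $X_m/X_{m-1}$) in the relevant localizations: rationally trivial for $m > 1$, $p$-locally trivial when $m$ is not a power of $p$, and $K(n)$-locally trivial for $m > p^n$. Each of these is a direct consequence of the main results of Arone--Lesh, and reflects the well-known splittings of $ku$ into Bott pieces after rational, $p$-local, and $K(n)$-local localization. The principal technical obstacle will be the $E_\infty$ refinement of Arone--Lesh's construction: the original work is set up in spectra, so one must recast it in a symmetric monoidal $\infty$-category of $E_\infty$ spectra (or a sufficiently rigid point-set model of commutative ring spectra) in which the structure maps $X_m \to \pic(\mb S)$ extend coherently through the entire tower. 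Establishing this compatibility of the filtration with the $E_\infty$ Thom spectrum functor is presumably the bulk of the technical work.
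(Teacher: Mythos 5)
Your overall strategy---Thomify an Arone--Lesh-type filtration and use the Ando--Blumberg--Gepner--Hopkins--Rezk correspondence between $E_\infty$ maps out of Thom spectra and lifting problems against $\pic(E)$---is the paper's strategy. But there are two substantive problems with the outline as written.

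First, you have the input filtration wrong. The Arone--Lesh tower used here interpolates $ku \to H\mb Z$: spectra $b_0 = ku \to b_1 \to \cdots$ with $\hocolim b_m \simeq H\mb Z$, each stage a pushout along $\Sigma^\infty (F_m)_+ \to \Sigma^\infty S^0$. The paper then sets $x_m = \hofib(ku \to b_m)$, which filters $bu \simeq \Sigma^2 ku$, and Thomifies the infinite loop maps $X_m = \Omega^\infty x_m \to BU$. A filtration $0 \to X_1 \to \cdots$ converging to $ku$ itself is the Rognes rank filtration, which the paper explicitly sets aside in a remark: the Thom spectrum of $ku \to \pic(\mb S)$ is the periodic spectrum $MUP$, not $MU$, and its first stage is not Baker--Richter's universal complex-oriented $E_\infty$ ring, so parts (1) and (2) would come out wrong. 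Relatedly, the subquotients are the reduced suspension spectra $\Sigma^\infty F_m$, not $\Sigma^{2m}\Sigma^\infty_+ F_m$; the $E \wedge S^{2m}$ in the pullback square of (3) arises because the basepoint of $F_m \subset B_{m-1}$ classifies the invertible $E$-module $E \wedge S^{2m}$ in $\Pic(E)$, not from a suspension of the cofiber. (Also, for (3) one needs that the $x_m$ are $0$-connected to trade nullhomotopies of $x_m \to bgl_1(E)$ for extensions of $ku \to \pic(E)$ over $b_m$, and then the pushout defining $b_m$ to get the square; your fiber-sequence version is close but elides the basepoint bookkeeping that produces $Or(\xi)$.)

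Second, and more seriously, property (4) does not ``reduce to contractibility statements for $F_m$.'' The Thom spectrum functor does not carry the (co)fiber sequences of the filtration to cofiber sequences of spectra, so local triviality of the subquotient of $x_m$ does not directly yield that $MX_{m-1} \to MX_m$ is a local equivalence. The paper's actual argument: smash with $MU$ and use the Thom diagonal to untwist, identifying $MU \wedge MX_m \simeq MU[X_m]$; use the fiber sequence of $E_\infty$ spaces $QF_m \to X_{m-1} \to X_m$ to obtain $MU \wedge MX_m \simeq MU \wedge_{MU[QF_m]} (MU \wedge MX_{m-1})$; base-change along any associative $MU$-algebra $E$ for which $E \to E[QF_m]$ is an equivalence; use the Snaith splitting to deduce the $E_*$-triviality of $QF_m$ from that of $F_m$; and invoke Lazarev's result to realize the Morava $K$-theories as associative $MU$-algebras for the $K(n)$-local statement. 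None of this appears in your outline, and it is the mathematical content of part (4).
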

In particular, the spectrum $MX_1$ will be a universal complex
oriented $E_\infty$ ring spectrum described by Baker--Richter
\cite{baker-richter-thom}.

The suspended spaces $F_m$ are explicitly described by
\cite{arone-lesh-filtered} as being derived orbit spectra
$(L_m)^\diamond \wedge^{\mb L}_{U(m)} S^{2m}$. Here $L_m$ is the nerve
of the (topologized) poset of proper direct-sum decompositions of $\mb
C^m$, $S^{2m}$ is the one-point compactification of $\mb C^m$, and
$\diamond$ denotes unreduced suspension. Alternatively, the space
$F_m$ can be described as the homotopy cofiber of the map of Thom
spaces
\[
(L_m \times_{U(m)} EU(m))^{\gamma_m} \to BU(m)^{\gamma_m}
\]
for the universal bundle $\gamma_m$.

In the particular case of an $E(1)$-local $E_\infty$ ring spectrum
$E$, such as a form of $K$-theory \cite[Appendix A]{tmforientation},
this will allows us to verify that Ando's criterion is both necessary
and sufficient if $E^*$ is torsion-free.  At higher chromatic levels
there are expected to be secondary and higher obstructions involving
relations between power operations.

\begin{rmk}
  Rognes had previously constructed a similar filtration on algebraic
  $K$-theory spectra \cite{rognes-rankfiltration}, further examined in
  the case of complex $K$-theory in
  \cite{arone-lesh-rankfiltration}. This filtration gives rise to a
  sequence of spectra interpolating the map $* \to ku$ rather than $ku
  \to \eilm{\mb Z}$. On taking Thom spectra of the resulting infinite
  loop maps to $\mb Z \times BU$, the result should be a construction
  of the periodic complex bordism spectrum $MUP$ with very similar
  properties but slightly different subquotients, relevant to
  a more rigid orientation theory for $2$-periodic spectra.
\end{rmk}

The authors would like to thank Greg Arone and Kathryn Lesh for
discussions related to this material, to Andrew Baker, Eric Peterson,
and Nathaniel Stapleton for their comments, and to Jeremy Hahn for
locating an error in the previous version of this paper.

\section{The filtration of connective $K$-theory}

In this section, we will give short background on the results that we
require from \cite[3.9, 8.3, 9.4, 9.6, 11.3]{arone-lesh-filtered}.

\begin{prop}
  \label{prop:arone-lesh-space}
  There exists a sequence of maps of $E_\infty$ spaces
  \[
    B_0 \to B_1 \to B_2 \to B_3 \to \cdots
  \]
  with the following properties.
  \begin{enumerate}
  \item The space $B_\infty = \hocolim B_m$ is equivalent to the
    discrete $E_\infty$ space $\mb N$, and the induced maps $\pi_0 B_m
    \to \mb N$ are isomorphisms.
  \item The space $B_0$ is the nerve $\coprod BU(n)$ of a skeleton of
    the category of finite-dimensional vector spaces and isomorphisms,
    with the $E_\infty$ structure induced by direct sum.
  \item Let $\mb P$ denote the functor taking a space $X$ to the free
    $E_\infty$ space on $X$ \cite[3.5]{may-loopspaces}, with the
    homotopy type
    \[
    \mb P(X) = \coprod_{n \geq 0} (X^n)_{h\Sigma_n}.
    \]
    For each $m > 0$, there is a homotopy
    pushout diagram of $E_\infty$ spaces
    \[
      \xymatrix{
        \mb P({F_m}) \ar[r] \ar[d] & B_{m-1} \ar[d] \\
        \mb P(*) \ar[r] & B_m,
      }
    \]
    where $F_m$ is the path component of $B_{m-1}$ mapping to $m
    \in \mb N$.
  \item The map $F_m \to *$ is an isomorphism in rational homology
    if $m > 1$, an isomorphism in $p$-local homology if $m$ is not a
    power of $p$, and an isomorphism in $K(n)$-homology if $m > p^n$.
  \item The spectrum $\Sigma^\infty F_m$ is $(2m-1)$-connnected.
  \end{enumerate}
\end{prop}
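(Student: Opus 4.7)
The plan is to import the construction from Arone--Lesh directly, taking $B_m$ to be the $m$-th stage of their filtration of the $E_\infty$-space $\coprod_{n \geq 0} BU(n)$, and then to verify each of the four listed properties as either built into the construction or as a consequence of their results.

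Set $B_0$ equal to the nerve of the groupoid of finite-dimensional complex vector spaces under isomorphism, with $E_\infty$ structure induced by $\oplus$, giving (2) immediately. Given $B_{m-1}$ together with a compatible map to $\mb N$, define $F_m$ to be the component over $m \in \mb N$ and define $B_m$ by the pushout in (3). The nontrivial identification --- that this iterative definition agrees with Arone--Lesh's explicit model in which $F_m \simeq (L_m)^\diamond \wedge^{\mb L}_{U(m)} S^{2m}$ --- is the substantive content and is \cite[3.8, 9.5]{arone-lesh-filtered}.

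Property (1) then follows by induction on $m$: the pushout of commutative monoids $\mb N \xleftarrow{\times m} \mb N \xrightarrow{=} \mb N$ is again $\mb N$, since the coequalizer relation lets one eliminate the generator contributed by $\mb P(\ast)$. The component structure $\pi_0 B_m \to \mb N$ is thus preserved along the tower. In the colimit each component over $n$ becomes contractible once $m \geq n$, since stage $n$ contracts $F_n$ to a single point in $B_n$ through the attached free $E_\infty$ cell; hence $B_\infty \simeq \mb N$ as $E_\infty$-spaces.

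The main obstacle is property (4). The rational statement for $m > 1$ is relatively direct from the isotropy structure of the $U(m)$-action on $L_m$. The harder assertions --- $p$-local triviality of $F_m$ when $m$ is not a prime power, and $K(n)$-acyclicity when $m > p^n$ --- are the genuinely technical part and require the full strength of Arone--Lesh's analysis of the partition poset: for $m$ not a prime power one invokes the classical $\mb F_p$-acyclicity of the decomposition complex, while for $m > p^n$ one uses their more delicate induction bounding the Bousfield class of $L_m$ against the chromatic filtration. Both ingredients are established as \cite[9.7, 11.3]{arone-lesh-filtered} and are used as black boxes; no new input is required beyond checking that the $E_\infty$ pushout presentation transports these acyclicity statements to the derived orbit spectrum $F_m$.
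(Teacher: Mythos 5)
Your proposal is correct and matches the paper's treatment: the paper offers no independent proof of this proposition, citing exactly \cite[3.8,9.5,9.7,11.3]{arone-lesh-filtered} as a black box, just as you do. Your additional sketches of why (1)--(3) follow from the pushout presentation are consistent with that source and add nothing that conflicts with the paper's approach.
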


The explicit description allows analysis of the filtration quotients
using \cite[2.5]{arone-lesh-filtered}.  The space $F_1$ is the path
component $BU(1) \subset \coprod BU(n)$.  We then get a homotopy
commutative diagram
\begin{equation}
  \label{eq:powerconstruction}
  \xymatrix{
    B(\Sigma_p \wr U(1)) \ar[rr] \ar[dr] \ar[dd] && BU(p) \ar[d] \\
    & \mb P(BU(1)) \ar[r] \ar[d] & \coprod BU(n) \ar[d] \\
    B\Sigma_p \ar[r] & \mb P(*) \ar[r] & B_1
  }
\end{equation}
with the top map induced by the inclusion of the monomial matrices
in $U(p)$ and the left map induced by the projection $\Sigma_p \wr
U(1) \to \Sigma_p$.  The
homotopy pushout of the subdiagram
\begin{equation}
  \label{eq:pthspace}
\xymatrix{
  B\Sigma_p & \ar[l] B(\Sigma_p \wr U(1)) \ar[r] &
  BU(p)
}
\end{equation}
maps to $F_p$ by a $p$-local homotopy equivalence.

Arone--Lesh then apply an infinite loop space machine to the sequence
of Proposition~\ref{prop:arone-lesh-space}, with the following result.

\begin{cor}
  There exists a sequence of connective spectra
  \begin{equation}
    \label{eq:al-filtration}
    b_0 \to b_1 \to b_2 \to b_3 \to \cdots
  \end{equation}
  with the following properties.
  \begin{enumerate}
  \item The spectrum $b_\infty = \hocolim b_m$ is equivalent to the
    spectrum $H\mb Z$, and the induced maps $\pi_0 b_n \to \mb Z$ are
    isomorphisms.
  \item The spectrum $b_0$ is the complex $K$-theory spectrum $ku$.
  \item For each $m > 0$, there is a homotopy pushout diagram
    \begin{equation}
      \label{eq:spectrum-pushout}
      \xymatrix{
        \Sigma^\infty (F_m)_+ \ar[r] \ar[d] & b_{m-1} \ar[d] \\
        \Sigma^\infty S^0 \ar[r] & b_m.
      }
    \end{equation}
  \item The map $b_{m-1} \to b_m$ is an isomorphism in rational
    homology if $m > 1$, an isomorphism in $p$-local homology if $m$
    is not a power of $p$, and an isomorphism in $K(n)$-homology if $m
    > p^n$.
  \item The homotopy fiber $\Sigma^\infty F_m$ of the map $b_{m-1} \to
    b_m$ is $(2m-1)$-connected.
  \end{enumerate}
\end{cor}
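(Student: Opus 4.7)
The plan is to apply a group-completion infinite loop space machine $\Phi$ (a left adjoint to $\Omega^\infty$ from connective spectra to $E_\infty$-spaces) termwise to the diagram of Proposition~\ref{prop:arone-lesh-space}, setting $b_m = \Phi(B_m)$. As a left adjoint, $\Phi$ preserves homotopy colimits, so each homotopy pushout from part (3) of the proposition becomes a homotopy pushout of connective spectra, and the sequential colimit yields $b_\infty \simeq \hocolim b_m$.

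First I would identify the relevant values of $\Phi$. Because $\mb P$ is left adjoint to the forgetful functor from $E_\infty$-spaces to unpointed spaces, the composite $\Phi \circ \mb P$ is left adjoint to $\Omega^\infty$ viewed as a functor to unpointed spaces, so $\Phi(\mb P(X)) \simeq \Sigma^\infty X_+$ naturally in $X$. This immediately identifies the left column of the desired pushout square as $\Sigma^\infty (F_m)_+ \to \Sigma^\infty S^0$, verifying (3). For the base case $b_0 = \Phi(\coprod_n BU(n))$, the classical identification of the $K$-theory of the groupoid of finite-dimensional complex vector spaces under direct sum (Segal's theorem) gives $b_0 \simeq ku$. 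For the colimit, $B_\infty \simeq \mb N$ is a discrete commutative monoid whose group completion is the discrete abelian group $\mb Z$, and the machine applied to a discrete abelian group produces the corresponding Eilenberg--MacLane spectrum; hence $b_\infty \simeq H\mb Z$. The statement on $\pi_0$ follows because $\pi_0 b_m$ is the group completion of $\pi_0 B_m \cong \mb N$.

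For (4), the homological hypotheses on $F_m$ in Proposition~\ref{prop:arone-lesh-space}(4) translate directly to the statement that $\Sigma^\infty (F_m)_+ \to \Sigma^\infty S^0$ is a rational, $p$-local, or $K(n)$-local equivalence under the corresponding hypotheses on $m$. Since each of these classes of equivalences is preserved by cobase change --- the relevant localization functors are smashing or otherwise commute with homotopy pushouts --- the right vertical $b_{m-1} \to b_m$ in (3) inherits the same equivalence.

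The main conceptual step is choosing a model of the infinite loop space machine for which both the adjoint pair $(\Phi, \Omega^\infty)$ and the identification $\Phi \circ \mb P \simeq \Sigma^\infty_+$ are manifest; any modern operadic or $\Gamma$-space machine enhanced to expose the adjunction will do. Once $\Phi$ is fixed, all verifications above are formal aside from the single appeal to Segal's identification $b_0 \simeq ku$.
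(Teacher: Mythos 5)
Your proposal is correct and follows essentially the same route as the paper, which simply applies an infinite loop space machine (group completion) to the sequence of $E_\infty$-spaces from Proposition~\ref{prop:arone-lesh-space} and cites Arone--Lesh for the result. Your write-up merely makes explicit the formal points the paper leaves implicit (the adjunction $\Phi \dashv \Omega^\infty$, the identification $\Phi\circ\mb P \simeq \Sigma^\infty(-)_+$, Segal's theorem for $b_0 \simeq ku$, and preservation of $E_*$-isomorphisms under cobase change in spectra), all of which are sound.
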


\section{The filtration of $BU$}

\begin{defn}
For each $m \geq 0$, let $x_m$ be the homotopy fiber $\hofib(ku
\to b_m)$ of the maps from equation~(\ref{eq:al-filtration}), and let
$X_m = \Omega^\infty x_m$.
\end{defn}
This gives rise to a sequence of maps
\begin{equation}
  \label{eq:spectrum-filtration}
  * \simeq x_0 \to x_1 \to x_2 \to x_3 \to \cdots,
\end{equation}
with homotopy colimit $bu \simeq \Sigma^2 ku$ by Bott periodicity.
For each $m > 0$, diagram~(\ref{eq:spectrum-pushout}) and the
octahedral axiom imply that the homotopy fiber of $x_{m-1} \to x_m$ is
equivalent to the desuspension $\Omega \Sigma^\infty F_m$ of the
reduced suspension spectrum.

Applying $\Omega^\infty$ to the filtration of
equation~(\ref{eq:spectrum-filtration}), we obtain a filtration of
$BU$ by infinite loop spaces:
\begin{equation}
  \label{eq:space-filtration}
  * \to X_1 \to X_2\to X_3 \to \cdots
\end{equation}
The homotopy fiber of $X_{m-1} \to X_m$ is the space $\Omega QF_m =
\Omega^{\infty+1} \Sigma^\infty F_m$.

In order to analyze the effect of these maps in $K(n)$-local homology,
we will require some preliminary results.

\begin{lem}
  \label{lem:princfib}
  Suppose $x \to y \to z$ is a fiber sequence of spectra,
  $\Omega^\infty x$ is $K(n)$-locally trivial, and $x$ is
  connective. Then the map $\Omega^\infty y \to \Omega^\infty z$
  induces an isomorphism on $K(n)$-homology.
\end{lem}

\begin{proof}
  By assumption $\pi_0 y \to \pi_0 z$ is surjective, so the map
  $\Omega^\infty y \to \Omega^\infty z$ is a principal fibration whose
  fiber over any point is $\Omega^\infty x$. Applying the (natural)
  generalized Atiyah--Hirzebruch spectral sequence, we obtain a
  spectral sequence
  \[
    \mathcal{H}_*(\Omega^\infty z; K(n)_* (\Omega^\infty x)) \Rightarrow K(n)_*
    (\Omega^\infty y),
  \]
  where the $E_2$-term may be homology with coefficients in a local
  coefficient system. By assumption, the edge morphism to the
  Atiyah--Hirzebruch spectral sequence
  \[
    H_*(\Omega^\infty z; K(n)_*) \Rightarrow K(n)_*(\Omega^\infty z)
  \]
  is an isomorphism on $E_2$-terms, so it converges to an isomorphism
  $K(n)_* (\Omega^\infty y) \to K(n)_* (\Omega^\infty z)$.
\end{proof}

\begin{prop}
  \label{prop:connectivitypres}
  Let $W$ be a based space whose suspension spectrum is at least
  $k$-connected, and define $\mathcal{N}$ be the family of spectra $T$
  such that $\Omega^\infty(T \smsh{} W)$ is $K(n)$-acyclic.  The
  family $\mathcal{N}$ has the following properties:
  \begin{enumerate}
  \item $\mathcal{N}$ is closed under finite wedges.
  \item $\mathcal{N}$ is closed under filtered homotopy colimits.
  \item Suppose $T' \to T \to T''$ is a fiber sequence such that $T'$
    is a $(-k-1)$-connected spectrum in $\mathcal{N}$. Then $T$ is in
    $\mathcal{N}$ if and only if $T''$ is in $\mathcal{N}$.
  \item If $\widetilde H_{k+1}W$ is torsion, $\mathcal{N}$ contains
    the Eilenberg--Mac Lane spectrum $\Sigma^{n-k} HA$ for any abelian
    group $A$.
  \item If $W$ is $K(n)$-locally trivial, $\mathcal{N}$ contains
    $S^0$.
  \item If $W$ is $K(n)$-locally trivial and $\widetilde H_{k+1} W$ is
    torsion, $\mathcal{N}$ contains all $(n-k-1)$-connected
    spectra.
  \end{enumerate}
\end{prop}

\begin{rmk}
  In particular, since $\Sigma^\infty W$ is $k$-connected the
  assumption on $H_{k+1} W$ holds automatically with $k$ replaced by
  $(k-1)$. Therefore, $\Sigma^{n-k+1} HA$ is in $\mathcal{N}$ for any
  $A$, and if $W$ is $K(n)$-acyclic all $(n-k)$-connected spectra are
  in $\mathcal{N}$.
\end{rmk}

\begin{proof}
  We will prove these items individually.
  \begin{enumerate}
  \item There is a weak equivalence
    \[
      \Omega^\infty(\vee_{i=1}^N T_i \smsh{} W)
      \to \prod_{i=1}^N \Omega^\infty(T_i \smsh{} W),
    \]
    and so this follows from Morava $K$-theory's K\"unneth formula
    \[
      K(n)_*(X \times Y) \cong K(n)_* X \tens{K(n)_*} K(n)_* Y.
    \]
  \item The functor $\Omega^\infty(T \smsh{} W)$ preserves filtered
    homotopy colimits in $T$, and so there is an isomorphism
    \[
      \varinjlim K(n)_* (\Omega^\infty (T_\alpha \smsh{} W)) \cong 
      K(n)_* (\Omega^\infty ((\hocolim T_\alpha) \smsh{} W)).
    \]
    Therefore, $\hocolim T_\alpha$ is in $\mathcal{N}$ if the
    $T_\alpha$ are.
  \item The spectrum $T' \smsh{} W$ is connective, and so the result
    follows from a direct application of Lemma~\ref{lem:princfib}.
  \item The spectrum $\Sigma^{n-k} H\mb Z \smsh{} W$ is an 
    $n$-connected generalized Eilenberg--Mac Lane spectrum, and so
    there is a weak equivalance
    \[
      \Omega^\infty(\Sigma^{n-k} H\mb Z \smsh{} W) \xrightarrow{\sim}
      \prod_{i=n+1}^\infty K(\widetilde H_{k-n+i} W, i).
    \]
    By the work of Ravenel--Wilson
    \cite{ravenel-wilson-eilenbergmaclane}, $K(n)_* K(A,i)$ is trivial
    if $i > n+1$ or if $i = n+1$ and $A$ is a torsion abelian group,
    and so by the K\"unneth formula $\Sigma^{n-k} H\mb Z$ is in
    $\mathcal{N}$.

    Applying item 1, we find $\Sigma^{n-k} H(\mb Z^N)$ is in
    $\mathcal{N}$; applying item 2, we find $\Sigma^{n-k} HF$ is in
    $\mathcal{N}$ whenever $F$ is free abelian; applying item 3 to the
    fiber sequence
    \[
      \Sigma^{n-k} HR \to \Sigma^{n-k} HF \to \Sigma^{n-k} HA
    \]
    associated to a free resolution $0 \to R \to F \to A \to 0$, we
    find that $\Sigma^{n-k} HA$ is in $\mathcal{N}$.
  \item The Snaith splitting \cite{snaith-stabledecomposition} shows
    that we have a decomposition
    \[
      \Sigma^\infty(QW)_+ \simeq \bigvee_{k \geq 0} (\Sigma^\infty
      W^{\smsh{} k})_{h\Sigma_k}.
    \]
    The spectra $\Sigma^\infty W^{\smsh{} k} \to *$ are $K(n)$-locally
    trivial for $k > 0$, and so the same is true of the homotopy orbit
    spectra.

    (Note that Snaith splitting is required to deduce this equivalence
    on the level of Morava $K$-theory because the equivalence does not
    hold on the level of mod-$p$ homology. In particular, the
    $K(n)$-homology of a smash-power of $Z$ is not a functor of the
    $K(n)$-homology of $Z$.)

  \item First suppose that $T$ is connective. By items 5 and 3,
    $\mathcal{N}$ contains any sphere $S^i$ for $i \geq 0$. By items 1
    and 2, $\mathcal{N}$ contains any wedge $\vee S^i$ for $i \geq
    0$. By item 3, induction on the dimension shows that $\mathcal{N}$
    contains any connective finite-dimensional CW-spectrum. By item 2,
    $\mathcal{N}$ then contains any connective spectrum.
    
    We can now prove the general case. Suppose $T$ is
    $(n-k-1)$-connected with $n-k < 0$, and consider the following
    portion of the Whitehead tower of $T$:
    \[
      \xymatrix{
        T[0,\infty) \ar[r] & T[-1,\infty) \ar[r] \ar[d] & \dots \ar[r]
        & T[n-k+1,\infty) \ar[r] \ar[d] & T \ar[d]\\
        & \Sigma^{-1} H\pi_{-1} T & & \Sigma^{n-k+1} H\pi_{n-k+1} T &
        \Sigma^{n-k} H\pi_{n-k} T
      }
    \]
    We have just shown that $T[0,\infty)$ is in $\mathcal{N}$ because
    it is connective, and the spectra $\Sigma^i H\pi_i T$ are in
    $\mathcal{N}$ for $n-k \leq i \leq -1$ by item 4. By inductively
    applying item 3 we find that $T$ is in $\mathcal{N}$.\qedhere
  \end{enumerate}
\end{proof}

\begin{prop}
  \label{prop:trivialfibers}
  The natural map $\Omega QF_m \to *$ is a rational homology equivalence for
  $m > 1$, a $p$-local equivalence for $m$ not a power of $p$, and a
  $K(n)$-local equivalence if $m > p^n$.
\end{prop}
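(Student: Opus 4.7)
The plan is to bootstrap the acyclicity of $F_m$ established in Proposition~\ref{prop:arone-lesh-space}(4) up to the infinite loop space $QF_m$. Fix one of the three relevant homology theories $H\mb Q$, $H\mb Z_{(p)}$, or $K(n)$, and call it $\mathcal H$. Since $F_m$ is a connected based space (being a path component of $B_{m-1}$), the hypothesis that $F_m \to *$ is an $\mathcal H_*$-isomorphism is equivalent to the statement that the reduced suspension spectrum $\Sigma^\infty F_m$ is $\mathcal H$-acyclic, i.e.\ $\mathcal H \wedge \Sigma^\infty F_m \simeq *$.

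The key input I would invoke is Snaith's stable splitting: for a connected based space $Y$,
\[
\Sigma^\infty QY \simeq \bigvee_{k \geq 1} (E\Sigma_k)_+ \wedge_{\Sigma_k} (\Sigma^\infty Y)^{\wedge k}.
\]
The class of $\mathcal H$-acyclic spectra forms a smash-product ideal in the stable category, and it is closed under arbitrary homotopy colimits because smashing with $\mathcal H$ preserves colimits. Consequently each smash power $(\Sigma^\infty F_m)^{\wedge k}$ with $k \geq 1$ is $\mathcal H$-acyclic, each extended power $(E\Sigma_k)_+ \wedge_{\Sigma_k} (\Sigma^\infty F_m)^{\wedge k}$ is $\mathcal H$-acyclic as a homotopy orbit of such, and hence the wedge $\Sigma^\infty QF_m$ is $\mathcal H$-acyclic. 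This yields $\widetilde{\mathcal H}_*(QF_m) = 0$, which is the desired conclusion in all three cases.

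The virtue of this approach is that it avoids any spectral-sequence convergence check, remaining entirely at the level of $\mathcal H$-acyclicity in the stable category. I would expect the $K(n)$ case to be the one requiring the most care: for $H\mb Q$ and $H\mb Z_{(p)}$, there is a more elementary alternative via the Hurewicz theorem for connective spectra (passing from $HR$-acyclicity of $\Sigma^\infty F_m$ to triviality of $\pi_*(-) \otimes R$, and hence to $R$-local acyclicity of the nilpotent space $QF_m$), but this shortcut is not available for $K(n)$-homology. The Snaith-splitting argument is the essential workaround in that case.
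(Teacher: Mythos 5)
Your argument is correct and is essentially identical to the paper's: the paper likewise cites the Snaith splitting $\Sigma^\infty(QX)_+ \simeq \bigvee_{n\ge 0}(\Sigma^\infty X^{\wedge n})_{h\Sigma_n}$ and deduces the acyclicity of $QF_m$ from that of $F_m$. Your write-up simply makes explicit the closure of $\mathcal H$-acyclic spectra under smash powers and homotopy orbits, which the paper leaves implicit.
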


\begin{proof}
  When $m = 1$ there is nothing to show. When $m>1$ the suspension
  spectrum of $F_m$ is $k$-connected for some $k\geq 2m-1$. The space
  $F_m$ is also rationally trivial and $p$-locally trivial unless $m$
  is a power of $p$, so the rational homology and homotopy groups are
  always torsion and have $p$-torsion only when $m$ is a power of $p$;
  hence the same is true for both $\Sigma^\infty F_m$ and $\Omega Q
  F_m$. If $m > p^n$ then
  \[
    n-k-1 \leq n - (2m-1) - 1 < n - 2p^n \leq 1 - 2p < -2,
  \]
  so $S^{-1}$ is $(n-k-1)$-connected. By
  Proposition~\ref{prop:connectivitypres} item 6, we then find that
  $\Omega QF_m = \Omega^{\infty}(S^{-1} \smsh{} F_m)$ is
  $K(n)$-locally trivial.
\end{proof}

\section{Decomposition of $MU$}

\begin{defn}
For each $m \geq 0$, let $MX_m$ be the Thom spectrum of the infinite
loop map $X_m \to BU$.
\end{defn}
From the sequence~(\ref{eq:space-filtration}) of infinite loop spaces
over $BU$, we obtain a filtration of $MU$ by $E_\infty$ ring spectra:
\begin{equation}
  \label{eq:thom-filtration}
  \mb S \to MX_1 \to MX_2 \to MX_3 \to \cdots
\end{equation}

\begin{prop}
  For any associative $MU$-algebra $E$ such that $\Omega QF_m \to *$
  is an $E_*$-isomorphism, the map $MX_{m-1} \to MX_m$ induces an
  isomorphism in $E$-homology.
\end{prop}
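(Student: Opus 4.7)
The plan is to use the Thom isomorphism to reduce the claim to a statement about $E$-homology of the underlying spaces $X_{m-1}$ and $X_m$, and then deduce it from a Serre spectral sequence comparison for the fibration $QF_m \to X_{m-1} \to X_m$.

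Since $E$ is an associative $MU$-algebra, the unit map $MU \to E$ endows $E$ with a complex orientation. For any map $f \co Y \to BU$ classifying a virtual bundle of dimension zero---as holds for $X_m \to BU$, since $X_m$ is connected and $bu$ has $\pi_0 = 0$---the orientation supplies a natural $E$-module Thom equivalence $E \wedge Mf \simeq E \wedge Y_+$. Applied to the maps $X_{m-1} \to X_m \to BU$, this will identify $E \wedge MX_{m-1} \to E \wedge MX_m$ with the map $E \wedge (X_{m-1})_+ \to E \wedge (X_m)_+$ induced by $X_{m-1} \to X_m$. It then suffices to show that $E_*(X_{m-1}) \to E_*(X_m)$ is an isomorphism.

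For this, the homotopy fiber of $X_{m-1} \to X_m$ is $QF_m$, and I will compare the Serre spectral sequence for $QF_m \to X_{m-1} \to X_m$ with the Serre spectral sequence for the trivial fibration over $X_m$ with fiber a point. The induced map of $E^2$-pages is
\[
H_p(X_m; E_q(QF_m)) \to H_p(X_m; E_q(\ast)),
\]
which is an isomorphism by the hypothesis that $QF_m \to \ast$ is an $E_*$-isomorphism. By the spectral sequence comparison theorem, the induced map on abutments $E_*(X_{m-1}) \to E_*(X_m)$ is an isomorphism, and combined with the Thom isomorphism this yields the claim.

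The main subtlety to verify will be that the local coefficient systems on $X_m$ appearing in the Serre spectral sequence are constant, so that the $E^2$-page takes the form above. This will follow because the fibration arises as $\Omega^\infty$ of a fiber sequence of spectra: the fiber $QF_m$ is a grouplike infinite loop space and the monodromy action of $\pi_1(X_m)$ on $QF_m$ is by conjugation within an $H$-space, which is null-homotopic. Hence the induced action on $E_*(QF_m)$ is trivial and the comparison argument goes through.
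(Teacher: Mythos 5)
Your argument is correct, and it takes a genuinely different route from the paper's at the key step. Both proofs begin with the same move in different clothing: the paper smashes with $MU$ and uses the Thom diagonal to replace the tower of Thom spectra by the tower of $MU$-algebras $MU[X_m] = MU \wedge (X_m)_+$, then base-changes to $E$; you invoke the Thom isomorphism $E \wedge MX_m \simeq E \wedge (X_m)_+$ directly. Where you diverge is in how the fiber sequence $QF_m \to X_{m-1} \to X_m$ is exploited. The paper encodes it multiplicatively, via the equivalence $MU \wedge_{MU[QF_m]} (MU \wedge MX_{m-1}) \simeq MU \wedge MX_m$ of relative smash products, so that the hypothesis enters as the statement that $E \to E[QF_m]$ is an equivalence; this is clean and structured but requires working in a category of associative $MU$-algebras (which is why the paper must cite Lazarev to apply the result to Morava $K$-theories). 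You instead run a comparison of Atiyah--Hirzebruch--Serre spectral sequences, where the hypothesis enters on the $E^2$-page. Your route is more elementary and, as you correctly identify, its only delicate point is the local coefficient system; your justification is essentially right, though you should phrase it as: the fibration is principal (it is the fiber of the infinite loop map $X_m \to \Omega^\infty \Sigma \Sigma^\infty F_m$), so the monodromy of $\pi_1(X_m)$ on the fiber is given by conjugation in the grouplike, homotopy-commutative $H$-space $QF_m$, which is homotopic to the \emph{identity} (not null-homotopic) by Eckmann--Hilton. One could also note that your argument needs only a complex orientation of $E$ rather than an $A_\infty$ $MU$-algebra structure, so it would apply to Morava $K$-theory without appealing to Lazarev; the trade-off is that the paper's argument stays entirely within structured ring spectra and avoids any convergence or coefficient-system discussion.
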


\begin{proof}
  After smashing with $MU$, the Thom diagonal makes the sequence of
  equation~(\ref{eq:thom-filtration}) equivalent to the sequence of
  $MU$-algebras
  \[
  MU \to MU[X_1] \to MU [X_2] \to
  \cdots \to MU[BU],
  \]
  where for an $E_\infty$ space $M$ we define $MU[M]$ to be
  $E_\infty$ ring spectrum $MU \smsh{} M_+$.

  The fiber sequence $\Omega QF_m \to X_{m-1} \to X_m$ of $E_\infty$
  spaces implies that there are equivalences
  \[
  MU \smsh{MU[\Omega QF_m]} (MU \wedge MX_{m-1}) \simeq MU \wedge
  MX_m.
  \]
  Smashing this identification over $MU$ with $E$ translates this into
  an identity
  \[
    E \smsh{E[\Omega QF_m]} (E\smsh{} MX_{m-1}) \simeq E \smsh{}
    MX_m.
  \]
  Since the natural map $E[\Omega QF_m] \to E$ is an equivalence by
  assumption, the result follows.
\end{proof}

By work of Lazarev \cite{lazarev-ainfty}, $K(n)$ admits the
structure of an associative $MU$-algebra and so we can specialize this
result to the case where $E$ is a Morava $K$-theory. Combined with
Proposition~\ref{prop:trivialfibers}, this gives the following
result.
\begin{cor}
  The map $MX_{m-1} \to MX_m$ is a rational equivalence for $m > 1$, a
  $p$-local equivalence for $m$ not a power of $p$, and a $K(n)$-local
  equivalence if $m > p^n$.
\end{cor}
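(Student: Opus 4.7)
The plan is to apply the immediately preceding proposition three times, choosing the test $MU$-algebra $E$ differently in each case; in every instance the hypothesis on $QF_m$ required by that proposition will be supplied by Proposition~\ref{prop:trivialfibers}, so the content of the argument reduces to exhibiting appropriate $MU$-algebra structures on the detecting homology theories.

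For the rational assertion I would take $E = H\mb Q$, which is an $E_\infty$ (hence associative) $MU$-algebra via the standard composite $MU \to H\mb Z \to H\mb Q$: Proposition~\ref{prop:trivialfibers} provides that $QF_m \to *$ is a rational equivalence for $m > 1$, so the preceding proposition yields an $H\mb Q$-homology equivalence $MX_{m-1} \to MX_m$, which is the same thing as a rational equivalence. For the $p$-local assertion, a map of spectra is an $H\mb Z_{(p)}$-equivalence if and only if it is simultaneously an $H\mb Q$- and an $H\mb F_p$-homology equivalence, so I would apply the proposition twice, with $E = H\mb Q$ and with $E = H\mb F_p$ — both standard $MU$-algebras via $MU \to H\mb Z$ — using the $p$-local triviality of $QF_m$ for $m$ not a power of $p$. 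For the $K(n)$-local assertion I would take $E = K(n)$: by the work of Lazarev cited in the excerpt this spectrum carries an associative ($A_\infty$) $MU$-algebra structure, and combined with the $K(n)$-acyclicity of $QF_m$ for $m > p^n$ the proposition produces a $K(n)$-homology equivalence, which is exactly a $K(n)$-local equivalence.

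The main obstacle is essentially external to this corollary. The rational and $p$-local cases are routine bookkeeping once Proposition~\ref{prop:trivialfibers} and the preceding proposition are in hand, since $H\mb Q$ and $H\mb F_p$ are manifestly $MU$-algebras. The genuinely nontrivial input is the existence of an associative $MU$-algebra structure on each Morava $K$-theory $K(n)$; without Lazarev's theorem the $K(n)$-local statement would not be accessible by the formalism developed in this section.
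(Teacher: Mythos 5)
Your proposal is correct and is essentially the paper's argument: the authors likewise deduce the corollary by feeding Proposition~\ref{prop:trivialfibers} into the preceding proposition, citing Lazarev precisely to supply the associative $MU$-algebra structure on Morava $K$-theory, and leaving the (routine) choices of Eilenberg--MacLane test spectra for the rational and $p$-local cases implicit. Your version just spells out those choices and the standard reduction of ``$p$-local equivalence of connective spectra'' to $H\mb Q$- and $H\mb F_p$-homology equivalences.
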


In particular, we have the following equivalences:
\begin{align*}
  (MX_1)_{\mb Q} &\simeq MU_{\mb Q}\\
  L_{K(n)} MX_{p^n} &\simeq L_{K(n)} MU\\
  L_{E(n)} MX_{p^n} &\simeq L_{E(n)} MU
\end{align*}

\begin{rmk}
  This filtration on $MU$ relies only on the existence of the
  map $\coprod BU(n) \to \mb N$ of $E_\infty$ spaces. In particular,
  this construction is naturally equivariant for the action of the
  cyclic group $C_2$, determines an equivariant filtration of the
  Real $K$-theory spectrum, and a sequence of $C_2$-equivariant
  $E_\infty$ Thom spectra filtering the Real bordism spectrum $MU_{\mb
    R}$. However, the $K(n)$-local properties of this filtration
  appear to be less straightforward.
\end{rmk}

\section{Picard groups}

As described in the introduction, for an $E_\infty$ ring spectrum $E$
we let $\Pic(E)$ be the nerve of the symmetric monoidal category of
smash-invertible $E$-modules and weak equivalences
\cite{hopkins-mahowald-sadofsky,mathew-stojanoska-pictmf}. The
symmetric monoidal structure makes $\Pic(E)$ into a grouplike
$E_\infty$ space, and we write $\pic(E)$ for the associated
spectrum. The $0$-connected cover of $\pic(E)$ is $bgl_1(E)$.

As in \cite{ando-blumberg-gepner-hopkins-rezk}, a map $\xi\co X \to
\Pic(E)$ over the path component of an invertible $E$-module $E^\zeta$
parametrizes families of $E$-modules over $X$ with fibers equivalent
to $E^\zeta$, and there is an associated $E$-module Thom spectrum
$M_E\xi$. (Technically, to apply the results of
\cite{ando-blumberg-gepner-hopkins-rezk} we first need to smash with
the element $E^{-\zeta} \in \pi_0 \Pic(E)$ to move the target to
$BGL_1(E)$.)

The functor sending a complex vector space to the suspension spectrum
of its one-point compactification gives a map of $E_\infty$ spaces
$\coprod BU(m) \to \Pic(\mb S)$, and the associated map of spectra is
a map $ku \to \pic(\mb S)$.

The space $\Map_{E_\infty}(MX_m, E)$ is naturally equivalent to the
space of nullhomotopies of the composite map $x_m \to bu \to bgl_1(E)$
\cite{ando-hopkins-rezk-kotheory, ando-blumberg-gepner-hopkins-rezk}.
However, the spectra $x_m$ are $0$-connected, so this is equivalent to
the space of extensions in the diagram
\[
\xymatrix{
  ku \ar[d] \ar[r] & \pic(\mb S)\ar[d]  \\
  b_m \ar@{.>}[r] & \pic(E).
}
\]
If we have already fixed an extension $b_{m-1} \to \pic(E)$, the
pushout diagram~(\ref{eq:spectrum-pushout}) expresses the space of
compatible extensions to $b_m$ as the space of commutative diagrams
\begin{equation}
  \label{eq:pushout}
  \xymatrix{
    F_m \ar[r] \ar[d] & B_{m-1}\ar[d]\\
    CF_m \ar@{.>}[r] & \Pic(E).
  }
\end{equation}
We write $\xi$ for the diagonal composite $F_m \to \Pic(E)$ in this
diagram.

\begin{prop}
Given an extension of $ku \to \pic(E)$ to a map $b_{m-1} \to \pic(E)$,
the space of extensions to a map $b_m \to \pic(E)$ is equivalent to the
space $\Or(\xi)$ of orientations of the $E$-module Thom spectrum
$M_E\xi$ over $F_m$.
\end{prop}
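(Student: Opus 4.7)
The plan is to transport both sides to a common description as a space of cone fillings. Given the fixed extension $b_{m-1}\to\pic(E)$, the pushout diagram~(\ref{eq:spectrum-pushout}) identifies the space of compatible extensions $b_m\to\pic(E)$ with the homotopy fiber, over the composite $\xi\co F_m\to B_{m-1}\to\Pic(E)$, of the restriction map
\[
\Map(CF_m,\Pic(E))\longrightarrow\Map(F_m,\Pic(E)).
\]
Since $CF_m$ is contractible, evaluation at the cone point exhibits $\Map(CF_m,\Pic(E))\simeq\Pic(E)$, and under this equivalence the restriction becomes the inclusion of constant maps. Consequently the space of extensions records a point $E^\zeta\in\Pic(E)$ together with a nullhomotopy of $\xi$ to the constant map at $E^\zeta$.

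Second, I would identify this nullhomotopy space with $Or(\xi)$ using the Thom-spectrum formalism of \cite{ando-blumberg-gepner-hopkins-rezk}. Since $F_m$ is connected and $\xi$ factors through the component of $B_{m-1}$ lying over $m\in\pi_0 B_\infty=\mb N$, the cone-point image $E^\zeta$ must lie in the component of $E\wedge S^{2m}$ in $\pi_0\Pic(E)$. Smashing with the invertible $E$-module $E^{-2m}$ translates $\xi$ to a map $F_m\to BGL_1(E)$, and the filling data above becomes exactly a nullhomotopy of this translated map. By \cite{ando-blumberg-gepner-hopkins-rezk}, the space of such nullhomotopies is naturally equivalent to the space of orientations $M_E\xi\to E\wedge S^{2m}$, which is $Or(\xi)$.

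The step I expect to require the most care is the second: matching up the space-level definition of $Or(\xi)$ from \cite{ando-blumberg-gepner-hopkins-rezk}, typically phrased via based nullhomotopies into $BGL_1(E)$, with the unbased cone-filling space produced by the pushout. Both should represent the same homotopy fiber of comparable restriction maps, but verifying this requires tracking which path components and basepoints enter into each convention, and confirming that the translation by $E^{-2m}$ is coherent enough to produce an equivalence of spaces natural in $\xi$, not merely a bijection of components.
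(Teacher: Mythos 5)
Your proposal is correct and follows essentially the same route as the paper: both identify the extension space with the homotopy fiber of $\Map(CF_m,\Pic(E)) \to \Map(F_m,\Pic(E))$ over $\xi$, use the connectedness of $F_m$ to pin the cone-point value to the component of $E \wedge S^{2m}$, and then invoke the identification of nullhomotopies with orientations from \cite{ando-blumberg-gepner-hopkins-rezk}. The paper organizes the basepoint bookkeeping you flag as delicate into an explicit diagram of pullback squares, and the translation by $E^{-\zeta}$ into $BGL_1(E)$ is already set up in its discussion of $\Pic(E)$, so no further gap remains.
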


\begin{proof}
We must show that the space of homotopies from $\xi$ to a constant map
is equivalent to the space $\Or(\xi)$ of orientations: maps of
$E$-modules $M_E\xi \to E \wedge S^{2m}$ which restrict to an equivalence on
Thom spectra at each point.

In our case, we may choose a basepoint $* \in BU(m)$ classifying the
vector bundle $\mb C^m \to *$, whose image $* \to F_m \to \Pic(E)$
corresponds to the $E$-module $E \wedge S^{2m}$. We construct the
following diagram of pullback squares.
\[
\xymatrix{
  \mathop{\Or}(\xi) \ar[r] \ar[d] & \{\xi\} \ar[d] \\
  \Map_*(CF_m,\Pic(E)) \ar[r] \ar[d] &
  \Map_*(F_m, \Pic(E)) \ar[r] \ar[d] & \{E \wedge S^{2m}\} \ar[d] \\
  \Map(CF_m,\Pic(E)) \ar[r] & \Map(F_m, \Pic(E)) \ar[r] & \Map(*,\Pic(E))
}
\]
Here the bottom map, of necessity, lands in the path component of the
$E$-module $E \wedge S^{2m}$, and the upper-left pullback is the space
$\Or(\xi)$ because $F_m$ is connected. Therefore, given this map
$\xi\co F_m \to \Pic(E)$, the space of extensions is equivalent to the
space of orientations of the $E$-module Thom spectrum $M_E\xi$.
\end{proof}

\section{Orientation towers}
\label{sec:orientation-towers}

The space of $E_\infty$ orientations $MU \to E$ can now be expressed
as the homotopy limit of the tower
\[
  \cdots \to \Map_{E_\infty}(MX_3,E) \to \Map_{E_\infty} (MX_2, E) \to
  \Map_{E_\infty} (MX_1, E) \to *.
\]
The description of the space of extension diagrams from
equation~(\ref{eq:pushout}) is equivalent to a homotopy pullback
square
\[
\xymatrix{
  \Map_{E_\infty}(MX_m,E) \ar[r] \ar[d] &
  \Map_{E_\infty}(MX_{m-1},E) \ar[d] \\
  \{*\} \ar[r]^-{E \wedge S^{2m}} &
  \Map_*(F_m, \Pic(E)),
}
\]
where the bottom arrow classifies a constant map to the component of
$E \wedge S^{2m}$ in $\Pic(E)$.  The space of lifts is the space
of orientations of the Thom spectrum on $F_m$, and so the unique
obstruction to a lifting is the existence of a Thom class.

When $m=1$, the space $\Map_{E_\infty}(MX_1,E)$ is the space of
orientations of the Thom spectrum classified by the composite
\[
BU(1) \to \coprod BU(n) \to \Pic(E).
\]
More specifically, the Thom spectrum of this composite is $E \wedge
MU(1) \simeq E \wedge BU(1)$. Orientations of this are classical
complex orientations: the space of orientations of this Thom spectrum
is the space of maps of $E$-modules $c_1\co E \wedge BU(1) \to E
\wedge S^2$ which restrict to the identity map of $E \wedge S^2$.
Therefore, $\Map_{E_\infty}(MX_1,E)$ is naturally the space of
ordinary complex orientations of $E$.

\section{Symmetric power operations}

In order to study $p$-local orientations by $MX_p$, we
will need to recall the construction of power operations.

Associated to a complex vector bundle $\xi \to X$, we have the Thom
space $\Th(\xi)$. This is functorial in maps of vector bundles which are
fiberwise injections, and for the exterior Whitney sum $\boxplus$
there is a natural isomorphism
\[
\Th(\xi \boxplus \xi') \cong \Th(\xi) \wedge \Th(\xi')
\]
that is part of a strong monoidal structure on $\Th$.

\begin{defn}
We define the following symmetric power functors:
\begin{align*}
  \Pm^\times(X) &= (X^{\times m})_{h\Sigma_m}&&\text{for $X$ a space.}\\
  \Pm^\wedge(X) &= (X^{\wedge m})_{h\Sigma_m}&&\text{for $X$ a based space.}\\
  \Pm^{\smsh{E}}(X) &= (X^{\wedge_E m})_{h\Sigma_m} &&\text{for $X$ an
    $E$-module.}
\end{align*}
For any of the symmetric monoidal structures $\owedge$ above, we will
write
\[
\Dm^\owedge\co X^{\owedge m} \to \Pm^\owedge(X)
\]
and
\[
\Delta^\owedge_m\co \Pm^\owedge(X \owedge Y) \to \Pm^\owedge(X)
\owedge \Pm^\owedge(Y)
\]
for the associated natural transformations.
\end{defn}

For a vector bundle $\xi \to X$, there is a natural vector bundle
structure on the map $\Pm^\times(\xi) \to \Pm^\times(X)$, and we have
a pullback diagram of vector bundles
\[
\xymatrix{
\xi^{\boxplus m} \ar[r]^-{\Dm^\times} \ar[d] & \Pm^\times(\xi) \ar[d]\\
X^{\times m} \ar[r]_-{\Dm^\times} & \Pm^\times(X).
}
\]
\begin{prop}
There are natural isomorphisms:
\begin{align*}
\Th(\Pm^\times \xi) &\cong \Pm^\wedge \Th(\xi)\\
E \wedge \Th(\xi) &\cong M_E(\xi)\\
E \wedge \Pm^\wedge(Y) &\cong \Pm^{\wedge_E}(E \wedge Y)
\end{align*}
\end{prop}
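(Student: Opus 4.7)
The plan is to verify the three isomorphisms in turn, each being essentially formal once definitions are unpacked. For the first, I identify the bundle $\Pm^\times(\xi) \to \Pm^\times(X)$ with the Borel construction on the $\Sigma_m$-equivariant bundle $\xi^{\boxplus m} \to X^{\times m}$, namely $E\Sigma_m \times_{\Sigma_m} \xi^{\boxplus m} \to E\Sigma_m \times_{\Sigma_m} X^{\times m}$. Because the action of $\Sigma_m$ on $E\Sigma_m$ is free, taking fiberwise disk and sphere bundles commutes with passage to orbits, so the Thom space of this bundle is $(E\Sigma_m)_+ \wedge_{\Sigma_m} Th(\xi^{\boxplus m})$. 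The strong monoidality of $Th$ recalled just before the proposition then provides a $\Sigma_m$-equivariant isomorphism $Th(\xi^{\boxplus m}) \cong Th(\xi)^{\wedge m}$, and substituting yields $(E\Sigma_m)_+ \wedge_{\Sigma_m} Th(\xi)^{\wedge m} = \Pm^\wedge Th(\xi)$.

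For the second isomorphism I appeal directly to the construction of the $E$-module Thom spectrum of \cite{ando-blumberg-gepner-hopkins-rezk}. The classifier $X \to BU(n) \to \Pic(\mb S) \to \Pic(E)$ factors through base change along the strong symmetric monoidal functor $E \wedge (-)\co \Sp \to E\md$, and $M_E$ is compatible with this base change, so $M_E\xi$ is canonically identified with $E \wedge \Sigma^\infty Th(\xi)$, i.e., with $E \wedge Th(\xi)$ regarded as an $E$-module.

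For the third, the key observation is that $E \wedge (-)\co \Sp \to E\md$ is itself strong symmetric monoidal: the canonical comparison $(E \wedge X) \wedge_E (E \wedge Y) \to E \wedge (X \wedge Y)$ is an isomorphism because $E \wedge_E E \simeq E$. Iterating this gives a natural $\Sigma_m$-equivariant isomorphism $(E \wedge Y)^{\wedge_E m} \cong E \wedge Y^{\wedge m}$, and since $E \wedge (-)$ is a left adjoint it commutes with the homotopy colimit computing $\Sigma_m$-orbits, producing $\Pm^{\wedge_E}(E \wedge Y) \cong E \wedge \Pm^\wedge(Y)$. The main technical obstacle throughout is point-set bookkeeping: the distinction between strict and homotopy orbits, and whether the various $\Sigma_m$-actions are cofibrant enough for the functors $Th$, $(-)_{h\Sigma_m}$, and $E \wedge (-)$ to commute on the nose with their derived counterparts. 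In a sufficiently rigid model of $E_\infty$ ring spectra and their modules, with cofibrant replacements where needed, these identifications can be made functorially, so the proof reduces to a direct diagram chase.
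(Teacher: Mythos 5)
Your proposal is correct, and in fact the paper offers no proof of this proposition at all — it is stated as a package of standard formal facts (Thom spaces commute with Borel constructions along free actions, ABGHR Thom spectra are compatible with base change along $E \wedge (-)$, and $E \wedge (-)$ is strong symmetric monoidal and colimit-preserving). Your argument supplies exactly the intended justifications, so it matches the paper's (implicit) approach.
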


\begin{defn}
Suppose $E$ has a chosen complex orientation $u$, and let $\varepsilon$
be the trivial complex vector bundle over a point. We write
\[
t_u(\xi)\co M_E\xi \to M_E(\varepsilon^{\dim(\xi)})
\]
for the natural $E$-module complex orientation.
\end{defn}
The map $t_u(\varepsilon)$ is the identity map of $M_E(\varepsilon)
\cong E \wedge S^2$, and $t_u(\gamma_1) = c_1$ for the tautological
bundle $\gamma_1 \to BU(1)$. Orientations commute with exterior sum:
the strong monoidal structure of $\Th$ gives us an identification
\[
t_u(\xi \boxplus \xi') \cong t_u(\xi) \wedge_E t_u(\xi').
\]
Naturality of these orientations in pullback diagrams holds:
for any map $f\co X \to Y$ and vector bundle $\xi \to Y$ we have 
\[
t_u(f^* \xi) = t_u(\xi) \circ M_E(f).
\]
In particular, this implies that $t_u(\xi^{\boxplus m}) =
t_u(\Pm^\times \xi) \circ M_E(\Dm^\times)$.

\begin{defn}
Write $\rho_m$ for the vector bundle $\Pm^\times(\varepsilon)$ on
$B\Sigma_m$, associated to the permutation representation of $\Sigma_m$ on
$\mb C^m$.
\end{defn}
For this vector bundle, the naturality of $\Dm^{\smsh{E}}$
implies that the Thom class $t_u(\rho_m^{\boxplus
  k}) \circ \Dm^{\smsh{E}}$ is the identity map.

The identities above allow us to verify several relations between Thom
classes.
\begin{prop}
For a complex vector bundle $\xi \to X$ and a point $i\co * \to X$
with a chosen lift to $i\co \varepsilon \to \xi$, we have the
following.
\begin{align*}
  t_u(\xi^{\boxplus m}) &= t_u(\Pm^\times(\xi)) \circ \Dm^{\smsh{E}}\\
  t_u(\xi^{\boxplus m}) &= t_u(\rho_m^{\oplus \dim(\xi)}) \circ \Pm^{\wedge_E}(t_u(\xi)) \circ
  \Dm^{\smsh{E}} \\
  t_u(\rho_m^{\oplus \dim(\xi)}) &=  t_u(\Pm^\times(\xi)) \circ \Pm^{\smsh{E}}(M_Ei) \\
  t_u(\rho_m^{\oplus \dim(\xi)}) &=  t_u(\rho_m^{\oplus \dim(\xi)}) \circ \Pm^{\smsh{E}}(t_u(\xi)) \circ \Pm^{\smsh{E}}(M_Ei)
\end{align*}
\end{prop}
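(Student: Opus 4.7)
The plan is to verify each of the four identities by systematically applying the three basic properties of $t_u$ recorded just before the proposition: the exterior-sum relation $t_u(\xi \boxplus \xi') = t_u(\xi) \wedge_E t_u(\xi')$, naturality under pullback $t_u(f^*\xi) = t_u(\xi) \circ M_E(f)$, and the normalization consisting of $t_u(\varepsilon) = \mathrm{id}$ together with the identity $t_u(\rho^{\oplus k}) \circ \Dm^{\smsh{E}} = \mathrm{id}$. Everything else is bookkeeping of the symmetric-monoidal identifications $M_E(\Pm^\times \xi) \cong \Pm^{\smsh{E}}(M_E \xi)$ and $M_E(\xi^{\boxplus m}) \cong (M_E\xi)^{\wedge_E m}$, together with the fact that under these identifications $M_E(\Dm^\times) = \Dm^{\smsh{E}}$ and $M_E(\Pm^\times f) = \Pm^{\smsh{E}}(M_E f)$.

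Identity (1) is just the equality $t_u(\xi^{\boxplus m}) = t_u(\Pm^\times\xi) \circ M_E(\Dm^\times)$ already recorded, restated via $M_E(\Dm^\times) = \Dm^{\smsh{E}}$. Identity (3) is pullback naturality applied to the square obtained by applying the functor $\Pm^\times$ to $i\co * \to X$ together with its chosen lift: one has a pullback $\Pm^\times(i^*\xi) = \Pm^\times \varepsilon^{\dim\xi} = \rho^{\oplus \dim\xi}$ of $\Pm^\times\xi$ along $\Pm^\times i\co B\Sigma_m \to \Pm^\times X$, so naturality of $t_u$ yields $t_u(\rho^{\oplus \dim\xi}) = t_u(\Pm^\times\xi) \circ M_E(\Pm^\times i) = t_u(\Pm^\times\xi) \circ \Pm^{\smsh{E}}(M_E i)$.

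For identity (2), the plan is to rewrite the right-hand side. Naturality of the diagonal $\Dm^{\smsh{E}}$ applied to the morphism $t_u(\xi)$ gives $\Pm^{\wedge_E}(t_u(\xi)) \circ \Dm^{\smsh{E}} = \Dm^{\smsh{E}} \circ t_u(\xi)^{\wedge_E m}$, and the prefix $t_u(\rho^{\oplus\dim\xi}) \circ \Dm^{\smsh{E}}$ is the identity by the normalization for $\rho$; so the composite collapses to $t_u(\xi)^{\wedge_E m}$, which equals $t_u(\xi^{\boxplus m})$ by iterating the exterior-sum relation. Identity (4) follows by combining the last two factors on the right via functoriality of $\Pm^{\smsh{E}}$ to obtain $\Pm^{\smsh{E}}(t_u(\xi) \circ M_E i)$; pullback naturality identifies $t_u(\xi) \circ M_E i$ with $t_u(i^*\xi) = t_u(\varepsilon^{\dim\xi})$, and this last map is the identity, so the composite collapses to $t_u(\rho^{\oplus \dim\xi})$.

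No single step is genuinely hard; the main care required is tracking the symmetric-monoidal identifications so that diagonals and pullbacks transfer correctly between vector bundles and their $E$-module Thom spectra, after which each identity reduces to a short formal manipulation built from the exterior-sum relation, pullback naturality, and the normalizations $t_u(\varepsilon) = \mathrm{id}$ and $t_u(\rho^{\oplus k}) \circ \Dm^{\smsh{E}} = \mathrm{id}$.
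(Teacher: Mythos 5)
Your proof is correct and follows exactly the route the paper intends: the paper states these four relations without a written proof, asserting only that they follow from the previously recorded identities ($t_u(\varepsilon)=\mathrm{id}$, the exterior-sum relation, pullback naturality, $t_u(\xi^{\boxplus m}) = t_u(\Pm^\times\xi)\circ M_E(\Dm^\times)$, and $t_u(\rho^{\oplus k})\circ \Dm^{\smsh{E}} = \mathrm{id}$), and your four derivations are precisely those verifications. The only point worth noting is that you (correctly) read the chosen lift $i\co \varepsilon \to \xi$ as furnishing a trivialization $i^*\xi \cong \varepsilon^{\dim\xi}$, which is what the appearance of $\rho^{\oplus \dim(\xi)}$ in the statement requires.
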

\begin{cor}
\label{cor:orientationrestriction}
For a complex vector bundle $\xi \to X$, the map $t_u(\rho_m) \circ
\Pm^{\wedge_E}(t_u(\xi))$ is an orientation of the Thom spectrum
$M_E(\Pm^\times(\xi))$ over $\Pm^\times X$ which coincides with
$t_u(\Pm^\times( \xi))$ after restriction to $X^{\times m}$ or
$B\Sigma_m$.
\end{cor}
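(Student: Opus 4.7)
The plan is to deduce the corollary from the four identities of the preceding proposition together with the natural identifications $\Pm^{\smsh{E}}(M_E \xi) \cong M_E(\Pm^\times \xi)$ and $\Pm^{\smsh{E}}(M_E \varepsilon^{\dim\xi}) \cong M_E(\rho^{\oplus\dim\xi})$. Under these identifications I read the map $t_u(\rho) \circ \Pm^{\smsh{E}}(t_u(\xi))$ as the composite $M_E(\Pm^\times \xi) \to M_E(\rho^{\oplus\dim\xi}) \to E \wedge S^{2m\dim\xi}$, interpreting $t_u(\rho)$ as shorthand for $t_u(\rho^{\oplus\dim\xi})$ so that the dimensions match those used in the proposition.

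To check agreement on $X^{\times m}$ I would pull back along the quotient map $\Dm^\times \co X^{\times m} \to \Pm^\times X$, under which $\Pm^\times \xi$ restricts to $\xi^{\boxplus m}$ and $M_E(\Dm^\times)$ is identified with $\Dm^{\smsh{E}}$. Precomposing the candidate gives $t_u(\rho^{\oplus\dim\xi}) \circ \Pm^{\smsh{E}}(t_u(\xi)) \circ \Dm^{\smsh{E}}$, which equals $t_u(\xi^{\boxplus m})$ by the second identity of the proposition, while precomposing $t_u(\Pm^\times \xi)$ with $M_E(\Dm^\times)$ yields the same answer by the naturality formula $t_u(f^*\xi) = t_u(\xi) \circ M_E(f)$. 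For the restriction to $B\Sigma_m$ I would pick a basepoint $i \co * \to X$ with a chosen trivialization $\varepsilon \to \xi$, producing a map $\Pm^\times i \co B\Sigma_m \to \Pm^\times X$ along which $\Pm^\times \xi$ pulls back to $\rho^{\oplus\dim\xi}$. Precomposing the candidate with $M_E(\Pm^\times i) = \Pm^{\smsh{E}}(M_E i)$ and using functoriality of $\Pm^{\smsh{E}}$ reduces the composite to $t_u(\rho^{\oplus\dim\xi}) \circ \Pm^{\smsh{E}}(t_u(\xi) \circ M_E i) = t_u(\rho^{\oplus\dim\xi})$, since $t_u(\xi) \circ M_E i = t_u(\varepsilon^{\dim\xi})$ is the identity; precomposing $t_u(\Pm^\times \xi)$ with $M_E(\Pm^\times i)$ produces the same $t_u(\rho^{\oplus\dim\xi})$ by the third identity of the proposition.

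Finally, to verify that the candidate really is an orientation, I would argue fiberwise. Every point $\bar p \in \Pm^\times X$ lies in the image of $\Dm^\times$, and the Thom fiber of $\Pm^\times \xi$ over $\bar p$ is canonically identified with the Thom fiber of $\xi^{\boxplus m}$ over any preimage $p$. By the first step, the candidate restricts along $\Dm^\times$ to $t_u(\xi^{\boxplus m})$, which is an orientation and hence an equivalence on each such Thom fiber; the candidate therefore restricts to an equivalence on each Thom fiber of $\Pm^\times \xi$. The main obstacle here is not conceptual but bookkeeping: one has to track the natural isomorphisms between $\Pm^{\smsh{E}} \circ M_E$ and $M_E \circ \Pm^\times$ carefully enough that the identities of the preceding proposition can be invoked verbatim, and then check that the naturality and functoriality clauses line up with the maps $\Dm^\times$ and $\Pm^\times i$ as claimed.
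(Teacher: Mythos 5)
Your argument is correct and is exactly the derivation the paper intends (the corollary is stated without proof as an immediate consequence of the preceding proposition): identities (1)–(2) give agreement after restriction along $\Dm^\times$ to $X^{\times m}$, identities (3)–(4) give agreement after restriction along $\Pm^\times(i)$ to $B\Sigma_m$, and the fiberwise surjectivity of $X^{\times m} \to \Pm^\times X$ upgrades the Thom class of $\xi^{\boxplus m}$ to the orientation property over $\Pm^\times X$. Your reading of $t_u(\rho)$ as $t_u(\rho^{\oplus \dim\xi})$ and your bookkeeping of the identification $\Pm^{\smsh{E}}\circ M_E \cong M_E\circ \Pm^\times$ are both the intended ones.
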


\section{Power operations}

Assume that $p$ is a fixed prime and $E$ is an $E_\infty$ ring
spectrum with a chosen complex orientation $u$. In this section we
will recall power operations on even-degree cohomology classes
\cite{rezk-power-operations}; in the case $E = MU$ these were
constructed by tom Dieck and Quillen, and used by Ando in his
characterization of $H_\infty$ structures
\cite{ando-ellipticpowerops}.

From here on, we will write $\rho = \rho_p$ for the permutation
representation of $\Sigma_p$.
\begin{defn}
For an $E$-module spectrum $M$, we define
\[
\mathcal{P}_u\co [M,E \wedge S^{2k}]_E \to [\Pp^{\smsh{E}} M, E \wedge
S^{2pk}]_E
\]
by the formula $\mathcal{P}_u(\alpha) = t_u(\rho^{\oplus k}) \circ
\Pp^{\smsh{E}}(\alpha)$.
\end{defn}
These power operations satisfy a multiplication formula. For
$E$-module spectra $M$ and $N$ with maps $\alpha \in [M,E \wedge
S^{2k}]$ and $\beta \in [N, E \wedge S^{2l}]$, we can form
\[
\alpha \wedge_E \beta \in [M \wedge_E N, E \wedge S^{2(k+l)}].
\]
Then there is a natural identity
\[
\mathcal{P}_u(\alpha \wedge_E \beta) \circ \Delta^{\wedge_E}_p=
\mathcal{P}_u(\alpha) \wedge_E \mathcal{P}_u(\beta).
\]
These power operations also depend on $u$, except when $k = 0$ where
they agree with the ordinary extended power construction.

\begin{rmk}
While the formula for the power operations is given using even
spheres, it implicitly relies on a fixed identification of $S^{2k}$
with the one-point compactification of a complex vector space.
\end{rmk}

\begin{defn}
For a complex vector bundle $\xi \to X$, let $j\co B\Sigma_p \times X
\to \Pp^\times(X)$ be the diagonal, and let $\rho \boxtimes \xi \to
B\Sigma_p \times X$ be the external tensor vector bundle $j^*
\Pp^\times(\xi)$. We define
\[
P_u\co E^{2k}(\Th(\xi)) \to 
E^{2pk}(\Th(\rho \boxtimes \xi))
\]
by the formula $P_u(\alpha) = \mathcal{P}_u(\alpha) \circ
M_E(j)$.
\end{defn}

\begin{defn}
For a $p$-local, complex orientable multiplicative cohomology theory
$F$, the transfer ideal $I_{tr} \subset F^*(B\Sigma_p)$ is the image
of the transfer map $F^* \to F^*(B\Sigma_p)$, generated by the image
of $1$ under the transfer. For any $Y$ equipped with a chosen map
to $B\Sigma_p$, we also write $I_{tr}$ for the ideal of $F^*(Y)$
generated by the image of $I_{tr}$.
\end{defn}

The natural transformations $P_u$ are multiplicative but not additive,
instead satisfying a Cartan formula. The terms in the Cartan formula
which obstruct additivity are transfers from the cohomology of proper
subgroups of the form $\Sigma_k \times \Sigma_{p-k} \subset \Sigma_p$.
If $E$ is $p$-local, in the evenly-graded ring $E^{2*}(\Th(\rho
\boxtimes \xi))$ the mixed terms in the Cartan formula are
contained inside the transfer ideal $I_{tr} \cdot E^{2*}(\Th(\rho
\boxtimes \xi))$.

\begin{prop}
The maps $P_u$ reduce to natural maps
\[
\Psi_u\co E^{2*}(\Th(\xi)) \to E^{2p*}(\Th(\rho \boxtimes \xi)) / I_{tr}
\]
that are additive and take any Thom class for $\xi$ to a Thom class
for $\rho \boxtimes \xi$. The maps $\Psi_u$ are multiplicative, in the
sense that for elements $\alpha \in E^{2*}(\Th(\xi))$ and $\beta \in
E^{2*}(\Th(\xi'))$ we have $\Psi_u(\alpha \beta) = \Psi_u(\alpha)
\Psi_u(\beta)$.
\end{prop}

\section{Cohomology calculations}

In this section, we fix a $p$-local, complex orientable multiplicative
cohomology theory $F$. Choosing a complex orientation of $F$, we use
$\mb G$ to denote the associated formal group law over $F^*$, and
$[n]_{\mb G}(x)$ the power series representing the associated $n$-fold
sum $(x +_{\mb G} x +_{\mb G} \cdots +_{\mb G}x)$.

\begin{prop}
\label{prop:cohomologyrestriction}
The restriction map
\[
F^*(B (\Sigma_p \wr U(1))) \to F^*(BU(1)^p) \times F^*(B\Sigma_p
\times BU(1))
\]
is injective.
\end{prop}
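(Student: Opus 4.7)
The plan is to reduce to a $p$-Sylow subgroup and then exploit the Serre spectral sequence for the associated Borel-construction fibration.

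First, let $P = C_p \wr U(1) \subseteq G = \Sigma_p \wr U(1)$ for a cyclic subgroup $C_p \subset \Sigma_p$. The index $[G:P] = (p-1)!$ is a unit in the $p$-local ring $F^*$, so the transfer-restriction composite on $F$-cohomology equals multiplication by $(p-1)!$, exhibiting $F^*(BG) \hookrightarrow F^*(BP)$ as a split injection. Moreover, the diagonal $\Sigma_p \times U(1) \hookrightarrow G$ restricts to $C_p \times U(1) \hookrightarrow P$, so it suffices to show that
\[
F^*(BP) \to F^*(BU(1)^p) \times F^*(B(C_p \times U(1)))
\]
is injective.

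To analyze $F^*(BP)$ I would use the Serre spectral sequence
\[
E_2^{s,t} = H^s\bigl(C_p;\, F^t\pow{x_1, \ldots, x_p}\bigr) \;\Longrightarrow\; F^{s+t}(BP)
\]
for the Borel-construction fibration $BU(1)^p \to BP \to BC_p$, where $C_p$ acts by cyclic permutation of the $x_i$. Decomposing the coefficient module by $C_p$-orbits of monomials, every non-fixed orbit has size $p$ and contributes a free $F^*[C_p]$-summand, while the fixed monomials are exactly the powers $(x_1 \cdots x_p)^k$ and contribute a trivial summand $F^*\pow{z}$, with $z := x_1 \cdots x_p$. Since free $C_p$-modules have vanishing cohomology in positive degrees,
\[
H^{>0}\bigl(C_p;\, F^*\pow{x_1, \ldots, x_p}\bigr) \;\cong\; H^{>0}(C_p;\, F^*)\pow{z}.
\]
Under the diagonal substitution $x_i \mapsto x$ inducing the restriction to $B(C_p \times U(1)) = BC_p \times BU(1)$, the class $z$ maps to $x^p$, so the induced map on positive-degree cohomology $H^{>0}(C_p; F^*)\pow{z} \hookrightarrow H^{>0}(C_p; F^*)\pow{x}$ is injective, landing in the corresponding $E_2$-term for $BC_p \times BU(1)$.

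Given $\alpha \in F^*(BP)$ restricting trivially to both $BU(1)^p$ and $B(C_p \times U(1))$, vanishing on the fiber $BU(1)^p$ forces $\alpha$ into positive Serre filtration, whose leading term sits in $H^{>0}(C_p; F^*)\pow{z}$; the further vanishing on $B(C_p \times U(1))$ kills this leading term via the injection above, and a descending induction on filtration gives $\alpha = 0$. The main technical obstacle is ensuring the Serre spectral sequence behaves correctly for the generalized cohomology $F$ with completed coefficients and that the leading-term argument survives strong convergence; this is managed by noting that the fibration admits a section coming from the subgroup $C_p \hookrightarrow P$ and that the target spectral sequence for $BC_p \times BU(1)$ collapses trivially (the $C_p$-action on the fiber is trivial), so naturality of the Serre construction transports the filtration argument cleanly.
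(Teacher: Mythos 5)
Your overall strategy is the same as the paper's: run the Borel (homotopy fixed point) spectral sequence, split the coefficient module $F^*\pow{x_1,\dots,x_p}$ into the fixed power series ring on $z = x_1\cdots x_p$ plus a cohomologically trivial part, detect filtration $0$ on the fiber $BU(1)^p$ and positive filtration on the diagonal subgroup. The preliminary transfer reduction to the Sylow subgroup $C_p \wr U(1)$ is a reasonable variant (the paper instead keeps $\Sigma_p$ throughout and only uses the $C_p$-module structure to compute the coefficients), and the orbit decomposition and the injectivity of $H^{>0}(C_p;F^*)\pow{z} \to H^{>0}(C_p;F^*)\pow{x}$, $z \mapsto x^p$, are correct.

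The gap is in the last step, where you pass from injectivity on $E_2$ (in positive filtration) to injectivity on $E_\infty$, i.e.\ to the statement that the ``leading term'' of $\alpha$ is detected in the target. Injectivity of a map of spectral sequences on the $E_2$-page does not by itself give injectivity on $E_\infty$: a permanent cycle in the source can map to a class that is a \emph{boundary} in the target, and then its image in $E_\infty$ of the target vanishes. Your proposed justification --- that the spectral sequence for $BC_p \times BU(1)$ ``collapses trivially'' because the $C_p$-action on the fiber is trivial --- is false for a general $p$-local complex orientable $F$: already for $F = KU_{(p)}$ the spectral sequence $H^s(C_p; KU^t) \Rightarrow KU^{s+t}(BC_p)$ has nontrivial differentials ($H^*(C_p;KU^*)$ is much larger than $KU^*(BC_p) \cong KU^*\pow{x}/[p](x)$), and the triviality of the action controls the $E_2$-term, not the differentials. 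The way to close this gap is the argument the paper actually makes: note that $z$ (the restriction of the top Chern class from $F^*(BU(p))$) and $x$ are permanent cycles, so in positive filtration both spectral sequences are obtained from the spectral sequence for $B\Sigma_p$ (resp.\ $BC_p$) by base change along $F^* \to F^*\pow{z}$ and $F^* \to F^*\pow{x}$; the map of spectral sequences is then the base change of the identity along the injection $F^*\pow{z} \hookrightarrow F^*\pow{x}$, hence stays injective on every page and on $E_\infty$. With that substitution (and the routine convergence caveats you already flag) your proof goes through.
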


We will discuss a proof of this result that requires more
multiplicative structure from $E$ but applies to a wider variety of
objects than $BU(1)$ in Section~\ref{sec:extra}.

\begin{proof}
Writing $B(\Sigma_p \times U(1)) \to B(\Sigma_p \wr U(1))$ as a map of
homotopy orbit spaces $BU(1)_{h\Sigma_p} \to (BU(1)^p)_{h\Sigma_p}$,
we obtain a diagram of function spectra
\[
\xymatrix{
F(B(\Sigma_p \wr U(1)), F) \ar[r]^-\sim \ar[d] &
F(BU(1)^p, F)^{h\Sigma_p} \ar[d]\\
F(B(\Sigma_p \times U(1)), F) \ar[r]_-\sim &
F(BU(1),F)^{h\Sigma_p}.
}
\]
Therefore, the map on cohomology is the abutment of a map of homotopy
fixed-point spectral sequences:
\[
\xymatrix{
H^s(\Sigma_p, F^t(BU(1)^p)) \ar@{=>}[r] \ar[d] &
F^{t+s}(B(\Sigma_p \wr U(1))) \ar[d] \\
H^s(\Sigma_p, F^t(BU(1))) \ar@{=>}[r] &
F^{t+s}(B(\Sigma_p \times U(1)))
}
\]
The composite $F^*(BU(p)) \to F^*(B(\Sigma_p \wr U(1))) \to
F^*(BU(1)^p)^{\Sigma_p}$ is an isomorphism. The latter map is the edge
morphism in the above spectral sequence, and so the line $s=0$
consists of permanent cycles.

As a module acted on by the group $C_p \subset \Sigma_p$, the
ring $F^*(BU(1)^p) \cong F^*\pow{\alpha_1,\ldots,\alpha_p}$ is a direct sum
of two submodules: the subring $F^*\pow{c_p}$ generated by the
monomials $(\prod \alpha_i)^k$, and a free $C_p$-module with no higher
cohomology. Therefore, for $s > 0$ the map
\[
H^s(\Sigma_p, F^*\pow{c_p}) \to H^s(\Sigma_p, F^*(BU(1)^p))
\]
is an isomorphism. The composite $F^*\pow{c_p} \to F^*(BU(1)^p) \to
F^*(BU(1))$ induces an injection on cohomology. The above spectral
sequences are, in positive cohomological degree, the tensor products
of this injective map of groups (which consist of permanent cycles)
with the cohomology spectral sequence for $F^*(B\Sigma_p)$, and so
converge to an injective map.
\end{proof}

\begin{cor}
For any complex vector bundle $\xi \to B(\Sigma_p \wr U(1))$, two
Thom classes for $\xi$ are equivalent if and only if their
restrictions to $BU(1)^p$ and $B\Sigma_p \times BU(1)$ are
equivalent.
\end{cor}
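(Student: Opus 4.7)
The plan is to reduce the claim directly to the injectivity statement of the preceding proposition via the Thom isomorphism. Let $d$ denote the complex dimension of $\xi$, and fix any one Thom class $u_1 \in F^{2d}(Th(\xi))$. Since $F$ is complex orientable and $\xi$ is a complex bundle, multiplication by $u_1$ gives a Thom isomorphism of $F^*$-modules
\[
F^*(B(\Sigma_p \wr U(1))) \xrightarrow{\;\cdot\, u_1\;} F^{*+2d}(Th(\xi)),
\]
and the analogous isomorphisms are induced by the restrictions of $u_1$ on the bundles $\xi|_{BU(1)^p}$ and $\xi|_{B\Sigma_p \times BU(1)}$. Naturality of the Thom isomorphism under pullback of bundles makes the squares comparing restriction on Thom cohomology with restriction on the base commute.

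Now suppose $u_2$ is a second Thom class for $\xi$ whose restrictions to $Th(\xi|_{BU(1)^p})$ and $Th(\xi|_{B\Sigma_p \times BU(1)})$ agree with those of $u_1$. The difference $u_2 - u_1$ then lies in $F^{2d}(Th(\xi))$ and restricts to zero on both pieces. Using the inverse Thom isomorphism, this difference corresponds to an element $v \in F^0(B(\Sigma_p \wr U(1)))$ whose images in $F^0(BU(1)^p)$ and $F^0(B\Sigma_p \times BU(1))$ both vanish. The preceding proposition then forces $v = 0$, so $u_1 = u_2$. The converse direction is immediate from functoriality of restriction.

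Essentially all the content is in the proposition; the corollary is a direct translation of its injectivity under the Thom isomorphism. The only point to be careful about is that one uses a \emph{single} Thom class to set up the three Thom isomorphisms simultaneously, so that the restriction squares commute on the nose rather than up to a correcting unit; this is automatic once one observes that $u_1$ itself restricts to the chosen Thom classes on the subspaces.
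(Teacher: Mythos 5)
Your argument is correct and is exactly the paper's argument — the paper's proof is the one-line observation that the product of restriction maps on Thom-space cohomology is injective by naturality of the Thom isomorphism, which you have simply written out in full (fixing one Thom class to set up compatible Thom isomorphisms and transporting the injectivity of the preceding proposition). No issues.
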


\begin{proof}
The product of the restriction maps on the $F$-cohomology of Thom
spaces is injective by naturality of the Thom isomorphism.
\end{proof}

\begin{prop}
If $F^*$ is torsion-free, the map
\[
F^*(B\Sigma_p \times BU(1)) \to F^*(BU(1)) \times
F^*(B\Sigma_p \times BU(1))/I_{tr}
\]
is injective.
\end{prop}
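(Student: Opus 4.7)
The plan is to reformulate injectivity as the trivial intersection of two ideals in $F^*(B\Sigma_p \times BU(1))$ and then dispatch it using Frobenius reciprocity. Since $F$ is complex orientable, there is a K\"unneth-type identification
\[
F^*(B\Sigma_p \times BU(1)) \cong F^*(B\Sigma_p)\pow{c_1},
\]
where $c_1$ denotes the first Chern class of the tautological line bundle on $BU(1)$. Write $\tau = \mathrm{tr}(1) \in F^0(B\Sigma_p)$ for the transfer of the unit along the cover $\ast \to B\Sigma_p$, so that $I_{tr}$ is the principal ideal $(\tau)$, and let $J \subset F^*(B\Sigma_p)$ denote the augmentation ideal, the kernel of restriction to a point. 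The two maps in the statement have kernels $J\cdot F^*(B\Sigma_p)\pow{c_1}$ and $(\tau)\cdot F^*(B\Sigma_p)\pow{c_1}$ respectively, so the claim reduces to showing that these two ideals meet only in zero.

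The decisive algebraic step is the identity $J \cdot \tau = 0$ in $F^*(B\Sigma_p)$, which comes from the projection formula for the finite cover $\ast \to B\Sigma_p$: for any $y \in F^*(B\Sigma_p)$,
\[
y \cdot \tau \;=\; y \cdot \mathrm{tr}(1) \;=\; \mathrm{tr}(\mathrm{res}(y)),
\]
and the right-hand side vanishes when $y \in J$ because then $\mathrm{res}(y) = 0$. This identity survives base change to $F^*(B\Sigma_p)\pow{c_1}$, so multiplication by $\tau$ annihilates the entire ideal $J \cdot F^*(B\Sigma_p)\pow{c_1}$.

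To finish, suppose $x$ lies in both $(\tau)$ and $J \cdot F^*(B\Sigma_p)\pow{c_1}$, and write $x = \tau \cdot w$. Because the composite $\mathrm{res} \circ \mathrm{tr}$ is multiplication by the index $[\Sigma_p : 1] = p!$, the class $\tau$ restricts to $p!$ on a point, giving $\mathrm{res}(x) = p! \cdot \mathrm{res}(w)$ in $F^*\pow{c_1}$. Since $x$ lies in $J \cdot F^*(B\Sigma_p)\pow{c_1}$, its image $\mathrm{res}(x)$ is zero, so $p! \cdot \mathrm{res}(w) = 0$. Here the torsion-free hypothesis enters: $F^*\pow{c_1}$ is torsion-free because $F^*$ is, so $p!$ is a non-zerodivisor, forcing $\mathrm{res}(w) = 0$ and hence $w \in J \cdot F^*(B\Sigma_p)\pow{c_1}$. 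Combining with the previous paragraph, $x = \tau w = 0$.

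The only real obstacle is bookkeeping; the substantive content is the projection formula together with the identity $\mathrm{res} \circ \mathrm{tr} = p!$, and the torsion-free hypothesis appears at exactly one point, to cancel $p!$ in $F^*\pow{c_1}$. Without it, $p!$ could be a zero-divisor and that single step would fail.
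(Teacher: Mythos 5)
Your proof is correct, and it takes a genuinely different route from the paper's. The paper makes the same K\"unneth reduction to $F^*(B\Sigma_p)\pow{c_1}$ (done a bit more carefully, via the skeleta $\mb{CP}^k$ and Mittag--Leffler), but then restricts further to $BC_p$ (injectively, using $p$-locality and the prime-to-$p$ index), invokes the computation $F^*(BC_p) \cong F^*\pow{z}/[p]_{\mb G}(z)$ together with Quillen's identification of the transfer ideal with the divided $p$-series $\langle p\rangle_{\mb G}(z)$, and computes the intersection of the ideals $(z)$ and $(\langle p\rangle_{\mb G}(z))$ explicitly, with torsion-freeness cancelling the constant term $p$. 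You replace all of that with two formal properties of the transfer for the cover $E\Sigma_p \to B\Sigma_p$: Frobenius reciprocity, giving $J\cdot\tau=0$ for the augmentation ideal $J$, and $\mathrm{res}\circ\mathrm{tr}=p!$, with torsion-freeness cancelling $p!$. Your argument is shorter, avoids the formal group law and the computation of $F^*(BC_p)$ entirely, and in fact never needs to pass to $C_p$; the paper's version has the advantage of producing explicit generators ($z$ and $\langle p\rangle_{\mb G}(z)$) for the two kernels in terms of the formal group, which is the language used later for the Ando criterion. One small imprecision worth fixing: the kernel of restriction to $BU(1)$ is the set of power series all of whose coefficients lie in $J$, which may be strictly larger than the ideal $J\cdot F^*(B\Sigma_p)\pow{c_1}$ if $J$ is not finitely generated; this costs you nothing, since multiplication by $\tau$ kills that larger kernel coefficient-by-coefficient, but the two steps ``$\mathrm{res}(w)=0$ implies $w\in J\cdot F^*(B\Sigma_p)\pow{c_1}$'' and ``$\tau$ annihilates $J\cdot F^*(B\Sigma_p)\pow{c_1}$'' should both be phrased in terms of the coefficient-wise kernel.
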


(This is similar to results from
\cite{hopkins-kuhn-ravenel-character}, though here we do not assume
that the coefficient ring $F^*$ is local.)

\begin{proof}
The natural K\"unneth isomorphisms on the skeleta $\mb{CP}^k \subset
BU(1)$ take the form
\[
F^*(B\Sigma_p \times \mb{CP}^k) \cong F^*(B\Sigma_p) \otimes_{F^*}
F^*(\mb{CP}^k).
\]
This inverse system in $k$ also satisfies the Mittag--Leffler
condition, and so it suffices to show that the map $F^*(B\Sigma_p) \to
F^* \times F^*(B\Sigma_p)/I_{tr}$ is injective. As the cyclic group
$C_p \subset B\Sigma_p$ has index relatively prime to $p$, the
left-hand map is injective in the commutative diagram
\[
\xymatrix{
F^*(B\Sigma_p) \ar@{^(->}[d] \ar[r] &
F^* \times F^*(B\Sigma_p)/I_{tr} \ar[d] \\
F^*(BC_p) \ar[r] &
F^* \times F^*(BC_p)/I_{tr}.
}
\]
It therefore suffices to prove that the bottom map is injective.

As $p$ is not a zero divisor in $F^*$, the $p$-series $[p]_{\mb G}(x)$
is not a zero divisor in $F^*(BU(1))$, and so the cohomology ring
$F^*(BC_p)$ is the quotient $F^*\pow{c_1} / [p]_{\mb G}(c_1)$
\cite{hopkins-mahowald-sadofsky}. The kernel of the map $F^*(BC_p) \to
F^*$ is generated by $c_1$, while the transfer ideal is generated by
the divided $p$-series $\langle p\rangle_{\mb G}(c_1) = [p]_{\mb
  G}(c_1)/c_1$ by naturality of the map $MU^* \to F^*$
\cite[4.2]{quillen-elementaryproofs}. The intersection of the ideals
$(c_1)$ and $(\langle p\rangle_{\mb G}(c_1))$ in the power series ring
consists of elements $g(c_1)\cdot \langle p\rangle_{\mb G}(c_1)$ such
that the constant coefficient of $g$ is annihilated by the constant
coefficient $p$ of $\langle p\rangle_{\mb G} (c_1)$. As $F^*$ is
torsion-free, this ideal is generated by $[p]_{\mb G}(c_1)$.
\end{proof}

\begin{cor}
\label{cor:torsionfreecheck}
For any complex vector bundle $\xi \to B\Sigma_p \times BU(1)$, two
orientations for $\xi$ are equivalent if and only if their
restrictions to $BU(1)$ are equal and their images in
$F^*(B\Sigma_p)/I_{tr} \otimes_{F^*(B\Sigma_p)} F^*(\Th(\xi))$ are equal.
\end{cor}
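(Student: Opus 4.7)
The plan is to deduce this from the preceding injectivity proposition by applying the Thom isomorphism to translate statements about equality of Thom classes into statements about vanishing of elements in $F^*(B\Sigma_p \times BU(1))$.

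First I would fix one of the two orientations, call it $t_1$, as the chosen Thom class, so that the Thom isomorphism presents $F^*(Th(\xi))$ as a free rank-one module over $F^*(B\Sigma_p \times BU(1))$ generated by $t_1$. Writing the second orientation as $t_2 = (1 + a) \cdot t_1$ for a unique $a \in F^*(B\Sigma_p \times BU(1))$ (or equivalently $t_1 - t_2 = -a \cdot t_1$), the question $t_1 = t_2$ is equivalent to $a = 0$.

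Next I would verify that the two hypotheses of the corollary translate through the Thom isomorphism into the hypotheses of the preceding proposition applied to $a$. The restriction to $BU(1)$ goes through by naturality of the Thom isomorphism along the pullback
\[
\xymatrix{
\xi|_{BU(1)} \ar[r] \ar[d] & \xi \ar[d] \\
BU(1) \ar[r] & B\Sigma_p \times BU(1),
}
\]
so the restrictions of $t_1$ and $t_2$ agree in $F^*(Th(\xi|_{BU(1)}))$ if and only if the image of $a$ in $F^*(BU(1))$ is zero. For the transfer ideal, one uses that $F^*(Th(\xi))$ is a free $F^*(B\Sigma_p \times BU(1))$-module and that $I_{tr} \subset F^*(B\Sigma_p \times BU(1))$ is the extension of scalars of an ideal from $F^*(B\Sigma_p)$; this makes the Thom isomorphism $F^*(B\Sigma_p \times BU(1))$-linear and in particular compatible with reduction modulo $I_{tr}$, so the images of $t_1$ and $t_2$ in $F^*(B\Sigma_p)/I_{tr} \otimes_{F^*(B\Sigma_p)} F^*(Th(\xi))$ agree if and only if $a$ vanishes in $F^*(B\Sigma_p \times BU(1))/I_{tr}$.

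At this point the preceding proposition gives $a = 0$, hence $t_1 = t_2$, and the converse direction is obvious from naturality. The only subtle point, and what I expect to require care in writing out, is the compatibility in step three: one must check that the $F^*(B\Sigma_p)$-linear structure that defines the transfer ideal on $F^*(Th(\xi))$ is the one induced from the ring structure on the base, so that reducing modulo $I_{tr}$ on the Thom spectrum agrees with reducing modulo $I_{tr}$ on $F^*(B\Sigma_p \times BU(1))$ under the Thom isomorphism. This is formal from the fact that the Thom isomorphism is an isomorphism of modules over the cohomology of the base.
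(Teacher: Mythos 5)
Your proof is correct and takes essentially the same approach as the paper, which deduces the corollary from the preceding injectivity proposition by invoking naturality and base-ring linearity of the Thom isomorphism. The point you flag as subtle --- that reduction modulo $I_{tr}$ commutes with the Thom isomorphism because the latter is an isomorphism of modules over the cohomology of the base --- is exactly the content of the paper's one-line justification.
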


\section{Orientations by $MX_p$}

In this section, we fix a $p$-local $E_\infty$ ring spectrum $E$ such
that $E^*$ is torsion-free, together with a complex orientation $u$
defined by a map $MX_1 \to E$. (We will continue to write $\mb G$ for
the associated formal group law.) In this section we will analyze the
obstruction to $p$-local maps from $MX_p$.

As in Section~\ref{sec:orientation-towers}, the space of extensions of
the complex orientation to an $E_\infty$ ring map $MX_p \to E$ is the
space of orientations of the Thom spectrum over $F_p$. The
homotopy pushout diagram for $F_p$ from equation~(\ref{eq:pthspace})
expresses the map $F_p \to B_1 \to \Pic(E)$ as a coherently
commutative diagram
\[
  \xymatrix{
    B(\Sigma_p \wr U(1)) \ar[r] \ar[d] &
    BU(p) \ar[d]^{\gamma_p} \\
    B\Sigma_p \ar[r]_-{\rho} & \Pic(E).
  }
\]
From diagram~(\ref{eq:powerconstruction}), the map $BU(p) \to \Pic(E)$
classifies the Thom spectrum $M_E \gamma_p = E \wedge M\gamma_p$
associated to the tautological bundle $\gamma_p$ of $BU(p)$, while the
map $B\Sigma_p \to \Pic(E)$ classifies the bundle $M_E \rho$
associated to the regular representation $\rho\co \Sigma_p \to
U(p)$. An orientation of the resulting Thom spectrum over $F_p$ exists
if and only if there are orientations of $M_E \gamma_p$ and $M_E \rho$
whose restrictions to $B(\Sigma_p \wr U(1))$ agree.

\begin{prop}
\label{prop:orientation-hardcheck}
There exists an orientation of the Thom spectrum over $F_p$ if and
only if the orientations $t_u(\rho)$ and $t_u(\gamma_p)$ have the same
restriction to $B(\Sigma_p \wr U(1))$, or equivalently if
\[
t_u(\rho) \circ \Pp^{\wedge_E}(\gamma_1) = t_u(\Pp(\gamma_1)).
\]
\end{prop}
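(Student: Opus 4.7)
The plan is to reduce the existence question to a compatibility condition on the overlap $B(\Sigma_p \wr U(1))$ using the $p$-local homotopy pushout description of $F_p$ from~(\ref{eq:pthspace}). By this pushout, an orientation of $M_E\xi$ over $F_p$ is equivalent to a pair of orientations, one of $M_E\gamma_p$ over $BU(p)$ and one of $M_E\rho$ over $B\Sigma_p$, whose restrictions to $B(\Sigma_p \wr U(1))$ agree under the canonical identification of pulled-back $E$-module Thom spectra supplied by the pushout coherence.

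Next I would verify that the two formulations of the compatibility condition in the statement agree. By naturality of Thom classes, $t_u(\gamma_p)$ restricts along the monomial inclusion $B(\Sigma_p \wr U(1)) \to BU(p)$ to $t_u(\Pp^\times \gamma_1)$. Using Corollary~\ref{cor:orientationrestriction}, the restriction of $t_u(\rho)$ along the projection $B(\Sigma_p \wr U(1)) \to B\Sigma_p$, combined with the canonical identification of pulled-back Thom spectra supplied by the complex orientation $c_1 = t_u(\gamma_1)$, is precisely $t_u(\rho) \circ \Pp^{\wedge_E}(c_1)$. Sufficiency is then immediate: if the canonical orientations agree on the overlap, the pair $(t_u(\gamma_p), t_u(\rho))$ glues along the pushout to produce an orientation over $F_p$.

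The main technical obstacle is the necessity direction: showing that if any orientation of $M_E\xi$ over $F_p$ exists, then the canonical orientations must already agree on $B(\Sigma_p \wr U(1))$. Since orientations of $M_E\gamma_p$ and $M_E\rho$ form torsors under the unit groups $(E^0 BU(p))^\times$ and $(E^0 B\Sigma_p)^\times$, any candidate pair has the form $(w_1 \cdot t_u(\gamma_p),\, w_2 \cdot t_u(\rho))$ with $w_1, w_2$ normalized to equal $1$ at the chosen basepoints. Compatibility on the overlap forces the canonical ratio $t_u(\gamma_p)|_{B(\Sigma_p \wr U(1))}/t_u(\rho)|_{B(\Sigma_p \wr U(1))}$ to be expressible as a quotient $w_2|/w_1|$ of restricted units. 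I expect to rule out such nontrivial expressions using the cohomological injectivity results of the previous section, which detect classes on $B(\Sigma_p \wr U(1))$ via restriction to $BU(1)^p$ and $B\Sigma_p \times BU(1)$, together with the basepoint normalization, thereby forcing the canonical ratio to be trivial whenever any compatible pair exists.
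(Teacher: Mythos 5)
Your proposal is correct and follows essentially the same route as the paper: reduce to a compatible pair of orientations over $BU(p)$ and $B\Sigma_p$ via the pushout, write an arbitrary pair as units times the canonical orientations $t_u(\gamma_p)$ and $t_u(\rho)$, and use Corollary~\ref{cor:orientationrestriction} together with the restriction maps to $BU(1)^p$ and $B\Sigma_p$ to force both units to be (equal) constants. The only cosmetic difference is that you normalize the units at basepoints up front, whereas the paper rescales by the common constant $a^{-1}$ at the end; the two devices are interchangeable.
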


\begin{proof}
The ``if'' direction is clear. In the other direction, we start by
assuming that we have some pair of orientations whose restrictions
agree.

Any orientation of $M_E \rho$ is of the form $a \cdot t_u(\rho)$
for some $a \in E^0(B\Sigma_p)^\times$, and similarly any
orientation of $M_E \gamma_p$ is of the form $b \cdot t_u(\gamma_p)$ 
for some $b \in E^0(BU(p))^\times$. These restrict to $a \cdot
t_u(\rho) \circ \Pp^{\wedge_E}(\gamma_1)$ and $b \cdot
t_u(\Pp(\gamma_1))$ respectively.

By Corollary~\ref{cor:orientationrestriction}, the restrictions of
these orientations to $BU(1)^p$ are $\epsilon(a) \cdot
t_u(\gamma_1^{\boxplus p})$ and $b \cdot t_u(\gamma_1^{\boxplus b})$,
where $\epsilon(a)$ is the natural restriction of $a$ to
$(E^0)^\times$ and we identify $b$ with its image under the injection
$E^0(BU(p)) \to E^0(BU(1)^p)$. Similarly, the restrictions of these
orientations to $B\Sigma_p$ are $a \cdot t_u(\rho)$ and $\epsilon(b)
\cdot t_u(\rho)$ respectively.

For these to be equal as needed, both $a$ and $b$ must be in the image
of $(E^0)^\times$ and equal. Changing the orientation by multiplying
by $a^{-1}$ then gives the desired result.
\end{proof}

Combining this with Corollary~\ref{cor:orientationrestriction} and
Corollary~\ref{cor:torsionfreecheck}, we find the following.
\begin{prop}
\label{prop:orientation-simplecheck}
There exists an orientation of the Thom spectrum over $F_p$ if and
only if the orientations $t_u(\rho)$ and $t_u(\gamma_p)$ define the
same generating class after first restricting to $B\Sigma_p
\times BU(1)$ and then tensoring over $F^*(B\Sigma_p)$ with
$F^*(B\Sigma_p)/I_{tr}$.
\end{prop}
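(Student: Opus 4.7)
The plan is to reduce the criterion of Proposition~\ref{prop:orientation-hardcheck}---the equality of two Thom classes for $\Pp^\times(\gamma_1)$ over $B(\Sigma_p \wr U(1))$---to the single mod-$I_{tr}$ condition asserted in the statement. First I would apply the injectivity corollary for $F^*(B(\Sigma_p \wr U(1)))$, which replaces that equality by joint equality of the two Thom classes after restriction to $BU(1)^p$ and to $B\Sigma_p \times BU(1)$. Specializing Corollary~\ref{cor:orientationrestriction} to $\xi = \gamma_1$ and $m = p$ then supplies the $BU(1)^p$-equality (and in fact also the $B\Sigma_p$-equality) for free, since $t_u(\rho) \circ \Pp^{\wedge_E}(t_u(\gamma_1))$ and $t_u(\Pp^\times(\gamma_1))$ are shown to coincide there. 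So the remaining obstruction lives entirely on $B\Sigma_p \times BU(1)$.

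On $B\Sigma_p \times BU(1)$ I would apply Corollary~\ref{cor:torsionfreecheck} to the external-tensor bundle $\rho \boxtimes \gamma_1$: two orientations coincide there if and only if they agree after restriction to $BU(1)$ and also agree after tensoring with $F^*(B\Sigma_p)/I_{tr}$. The second half of this condition is precisely what the proposition asserts, so the only remaining piece is to verify that the $BU(1)$-restriction condition is automatic. For this I would factor the basepoint inclusion $BU(1) \hookrightarrow B\Sigma_p \times BU(1)$ followed by the diagonal $B\Sigma_p \times BU(1) \to B(\Sigma_p \wr U(1))$ through the diagonal inclusion $BU(1) \hookrightarrow BU(1)^p \to B(\Sigma_p \wr U(1))$. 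Since the two Thom classes already coincide on $BU(1)^p$, they automatically coincide on this sub-$BU(1)$ as well.

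Assembling these observations immediately gives the proposition: the only surviving constraint is the equality modulo $I_{tr}$. The main obstacle I anticipate is the bookkeeping in this last factorization step---tracking the wreath-product projection, the covering map $BU(1)^p \to B(\Sigma_p \wr U(1))$, and the diagonal $B\Sigma_p \times BU(1) \to B(\Sigma_p \wr U(1))$ carefully enough to confirm that agreement on $BU(1)^p$ genuinely propagates to $BU(1)$-agreement through the base-point of $B\Sigma_p$. The remainder of the argument is a direct combination of the two injectivity results already established in the previous section.
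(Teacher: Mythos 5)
Your proposal is correct and follows exactly the route the paper intends: the paper gives no written proof beyond ``combining'' Proposition~\ref{prop:orientation-hardcheck} with Corollaries~\ref{cor:orientationrestriction} and~\ref{cor:torsionfreecheck}, and your argument is precisely that combination, including the correct observation that the $BU(1)^p$-agreement is supplied by Corollary~\ref{cor:orientationrestriction} and that the residual $BU(1)$-restriction condition from Corollary~\ref{cor:torsionfreecheck} is automatic because the basepoint copy of $BU(1)$ in $B\Sigma_p\times BU(1)$ factors through the diagonal into $BU(1)^p$. In fact you have spelled out more detail than the paper does.
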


We recall for the following that, if $E^*$ is torsion-free, we have an
isomorphism
\[
E^*(B\Sigma_p) \cong E^*(BC_p)^{\mb F_p^\times} =
\left(E^*\pow{z}/[p]_{\mb G}(z)\right)^{\mb F_p^\times},
\]
where the action of $\mb F_p^\times$ is by $z \mapsto [i]_{\mb G}[z]$.
\begin{thm}
If $E$ is an $E_\infty$ ring spectrum such that $E^*$ is $p$-local and
torsion-free, an $E_\infty$ orientation $MX_1 \to E$ extends to an
$E_\infty$ orientation $MX_p \to E$ if and only if the power operation
$\Psi_u$ satisfies the Ando criterion: we must have
\[
\Psi_u(c_1) = \prod_{i=0}^{p-1} (c_1 +_{\mb G} [i]_{\mb G}(z))
\]
in the ring $E^{2pk}(\Th(\rho \boxtimes \gamma_1)) / I_{tr}$.
\end{thm}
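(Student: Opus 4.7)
The plan is to translate Proposition \ref{prop:orientation-simplecheck} into an explicit identity of cohomology classes by computing both $t_u(\rho)$ and $t_u(\gamma_p)$ after pullback to $B\Sigma_p \times BU(1)$, and then recognize this identity as Ando's criterion. Specifically, the proposition reduces the existence of an $E_\infty$ extension $MX_p \to E$ to the coincidence of the two orientations, pulled back along the monomial inclusion $B(\Sigma_p \wr U(1)) \to BU(p)$ followed by the diagonal $j\co B\Sigma_p \times BU(1) \to B(\Sigma_p \wr U(1))$, as elements of $E^{2p}(\Th(\rho \boxtimes \gamma_1))/I_{tr}$.

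First I would identify the $t_u(\rho)$ side. By Proposition \ref{prop:orientation-hardcheck} the common pullback of $t_u(\rho)$ to $B(\Sigma_p \wr U(1))$ is $t_u(\rho) \circ \Pp^{\wedge_E}(c_1) = \mathcal{P}_u(c_1)$, and the further pullback along $j$ is $\mathcal{P}_u(c_1) \circ M_E(j) = P_u(c_1)$. Its reduction modulo $I_{tr}$ is $\Psi_u(c_1)$ by the defining construction of $\Psi_u$.

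Next I would identify the $t_u(\gamma_p)$ side. The monomial inclusion pulls $\gamma_p$ back to the extended power $\Pp^\times(\gamma_1)$, and $j$ pulls this back to $\rho \boxtimes \gamma_1$, so the resulting orientation is the canonical Thom class $t_u(\rho \boxtimes \gamma_1)$. To evaluate it modulo $I_{tr}$, I would restrict along the cohomologically injective map $BC_p \times BU(1) \hookrightarrow B\Sigma_p \times BU(1)$ afforded by the $\mb{F}_p^\times$-fixed-point formula $E^*(B\Sigma_p) \cong E^*(BC_p)^{\mb{F}_p^\times}$ recalled just before the theorem. Over $BC_p$ the permutation representation $\rho$ splits into characters $\bigoplus_{i=0}^{p-1} L^{\otimes i}$, so $\rho \boxtimes \gamma_1 \cong \bigoplus_{i=0}^{p-1} L^{\otimes i} \otimes \gamma_1$. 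Thom-class multiplicativity for direct sums and the identification of the Thom class of a line bundle with its first Chern class then show that $t_u(\rho \boxtimes \gamma_1)$ restricts to $\prod_{i=0}^{p-1} c_1(L^{\otimes i} \otimes \gamma_1)$, which by the formal group law equals $\prod_{i=0}^{p-1}(c_1 +_{\mb{G}} [i]_{\mb{G}}(z))$, with $z = c_1(L)$.

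Combining the two identifications, the extension exists if and only if
\[
\Psi_u(c_1) = \prod_{i=0}^{p-1}(c_1 +_{\mb{G}} [i]_{\mb{G}}(z))
\]
in $E^{2p}(\Th(\rho \boxtimes \gamma_1))/I_{tr}$, which is exactly Ando's criterion. The main obstacle I expect is the careful bookkeeping in the second step: verifying that the character splitting of $\rho$ over $BC_p$ produces the product formula without spurious unit corrections from Thom-isomorphism conventions, and confirming that the $\mb{F}_p^\times$-symmetric expression obtained on $BC_p \times BU(1)$ is indeed the descent of the $\Sigma_p$-equivariant Thom class pulled back from $BU(p)$.
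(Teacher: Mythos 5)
Your proposal is correct and follows essentially the same route as the paper's proof: reduce to Proposition~\ref{prop:orientation-simplecheck}, identify the restriction of $t_u(\rho)\circ \Pp^{\wedge_E}(t_u(\gamma_1))$ with $\Psi_u(c_1)$, and identify the other side with $t_u(\rho\boxtimes\gamma_1)$, whose product formula the paper attributes to the splitting principle. Your explicit computation via the character decomposition of $\rho$ over $BC_p$ (using the $\mb F_p^\times$-fixed-point description of $E^*(B\Sigma_p)$) is just a more detailed rendering of that last step.
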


\begin{proof}
It is necessary and sufficient, by
Proposition~\ref{prop:orientation-simplecheck}, to know that the
restrictions of $t_u(\rho) \circ \Pp^{\wedge_E}(\gamma_1)$ and
$t_u(\Pp(\gamma_1))$ to this target ring are equal. By definition, the
former restricts to $\Psi_u(t_u \gamma_1) = \Psi_u(c_1)$. The latter
restricts to $t_u(\rho \boxtimes \gamma_1)$, and so this formula for
the Thom class follows from the splitting principle.
\end{proof}

\section{Cohomology monomorphisms for $E$-theory}
\label{sec:extra}

In this section we will show the following result, which is closely
related to Proposition~\ref{prop:cohomologyrestriction} when $X =
BU(1)_+$.

\begin{prop}
  Let $E$ be a $p$-local, complex orientable $E_\infty$ ring spectrum
  whose coefficient ring has no $p$-torsion, and let $X$ be a based
  space with $p$-fold smash power $X^{(p)}$ such that $E_* X$ is a
  direct sum of (unshifted) copies of $E_*$. Then the restriction map
  \[
  E^*(X^{(p)}_{hC_p}) \to E^*(X^{(p)}) \times
  E^*((BC_p)_+ \wedge X)
  \]
  is a monomorphism.
\end{prop}

For instance, this is satisfied when $E$ is Morava $E$-theory and $X$
is of finite type with $\mb Z_{(p)}$-homology concentrated in even
degrees. This allows us to remove the finite type hypothesis from the
cohomology theory in \cite[VIII.7.3]{bmms-hinfty} so that it applies
to Morava $E$-theory (e.g. see \cite[4.4.2]{ando-isogenies},
\cite[proof of~6.1]{ando-hopkins-strickland-sigma}). We would like to
thank Eric Peterson and Nathaniel Stapleton for bringing this to our
attention.

\begin{proof}
  Write $M = F(\Sigma^\infty X^{(p)}, E)$ for the function spectrum,
  which has an action of $C_p$ from the source, and $N =
  F(\Sigma^\infty X,E)$, with the trivial action of $C_p$. The
  homotopy fixed-point map
  \[
  M^{hC_p} \to M,
  \]
  on homotopy groups, becomes the map $E^*(X^{(p)}_{hC_p}) \to
  E^*(X^{(p)})$. On the other hand, the map
  \[
  M^{hC_p} \to N^{hC_p}
  \]
  becomes the map $E^*(X^{(p)}_{hC_p}) \to E^*((BC_p)_+ \wedge X)$.
  We want to prove that these are jointly monomorphisms.

  By the assumptions on $X$, we have that $E \wedge X \simeq
  \bigvee_{\alpha} E$ as $E$-modules, and so there is a
  $C_p$-equivariant equivalence $E \wedge X^{(p)} \cong (E \wedge
  X)^{\wedge_E p}$. Using the decomposition of $E \wedge X$ into a
  wedge of copies of $E$, this decomposes $C_p$-equivariantly as an
  $E$-module into a $C_p$-fixed component and a $C_p$-free component:
  \[
  (E \wedge X)^{\wedge_E (p)} \cong \left(\bigvee_\alpha E\right)
  \vee \left(\bigvee_\beta (C_p)_+ \wedge E\right).
  \]
  The spectrum $M$ is $E$-dual to this, and we calculate
  \[
  \pi_* M^{hC_p} \cong \left(\prod_\alpha E_*\pow{x}/ [p]_F(x)\right)
  \times \prod_\beta E_*.
  \]
  We find that the map $M^{hC_p} \to M$ is a monomorphism on the right-hand
  factor. On the left-hand factor coming from the terms with trivial
  action, it becomes a product of projection maps
  \[
  \prod_\alpha E_*\pow{x} / [p]_F(x) \to \prod_\alpha E_*
  \]
  which send $x$ to zero. The kernel of this consists precisely of
  the multiples of $x$. Therefore, to finish the proof, we simply need
  to show that the multiples of $x$ map monomorphically into the
  homotopy of $N^{hC_p}$.

  We now consider, for any finite subcomplex $Y \subset X$, the
  natural diagram of homotopy fixed-point and Tate spectra:
  \[
  \xymatrix{
    F(X^{(p)}, E)^{hC_p} \ar[r] \ar[d] & 
    F(X, E)^{hC_p} \ar[d] \\
    F(X^{(p)}, E)^{tC_p} \ar[r] \ar[d] & 
    F(X, E)^{tC_p} \ar[d] \\
    F(Y^{(p)}, E)^{tC_p} \ar[r]& 
    F(Y, E)^{tC_p}.
  }
  \]
  The upper left-hand map is the localization
  \[
  \left(\prod_\alpha E_*\pow{x} / [p]_F(x)\right) \times \prod_\beta E_* \to x^{-1}
  \prod_\alpha E_*\pow{x}/ [p]_F(x),
  \]
  which is a monomorphism on the multiples of $x$. 

  The bottom map is a natural transformation of functors in $Y$, and
  is evidently an equivalence when $Y$ is $S^0$. Both of these
  functors take cofiber sequences of based spaces $Y$ to fiber
  sequences of spectra, as follows. For a cofiber sequence $Y' \to Y
  \to Y''$, the smash power of $Y$ has an equivariant filtration whose
  $k$'th associated graded consists of smash products of $k$ copies of
  $Y'$ and $(p-k)$ copies of $\Sigma Y''$; the terms other than $k =
  0$ and $k=p$ are acted on freely by $C_p$ and do not contribute to
  the Tate spectrum. Therefore, the bottom map is an equivalence for
  any finite complex $Y$.

  Given our space $X$, we observe that
  \[
  \colim_Y E_* Y \cong E_* X \cong \oplus_\alpha E_*
  \]
  as $Y$ ranges over the filtered system of finite subcomplexes of
  $X$.  Therefore, for any index $\alpha$ the corresponding generator
  of $E_* X$ lifts to $E_* Y$ for some such $Y$. Any nonzero element
  in $\pi_* F(X^{(p)},E)^{tC_p}$ then has nonzero restriction to
  $\pi_* F(Y^{(p)}, E)^{tC_p}$ for some $Y$, and hence (since the
  bottom map is an isomorphism) has nonzero image in $\pi_*
  F(X,E)^{tC_p}$. Thus, the center map is injective.

  Since the center map is injective and the upper-left map is
  injective on multiples of $x$, the top map in the diagram is also
  injective on multiples of $x$ as desired.
\end{proof}

\bibliography{../masterbib}

\end{document}